\newcounter{enumerConta}
\newtheorem{theorem}{Theorem}[section]
\newtheorem{corollary}{Corollary}
\newtheorem{main}{Main Theorem}
\newtheorem{lemma}[theorem]{Lemma}
\theoremstyle{definition}
\newtheorem{definition}[theorem]{Definition}
\title{Equivalence between spectral properties of graphs with and without loops}
\author{Eleonora Andreotti$^*$ and Daniel Remondini and Armando Bazzani}
\begin{document}
\maketitle
\centerline{\scshape Eleonora Andreotti$^*$}
\medskip
{\footnotesize
% please put the address of the first author
 \centerline{Department of Physics, University of Torino, 10125 Torino, Italy}
  }
\medskip

\centerline{\scshape Daniel Remondini and Armando Bazzani}
\medskip
{\footnotesize
 % please put the address of the second  and third author
 \centerline{ Department of Physics and Astronomy (DIFA), University of Bologna, 40127 Bologna, Italy}
      \centerline{INFN Section of Bologna, Italy}
}

\bigskip

% The name of the associate editor will be entered by an editorial staff
% "Communicated by the associate editor name" is not needed for special issue.
 %\centerline{(Communicated by the associate editor name)}

%%%%%%%%%%%%%%%%%%%%%%

%\begin{frontmatter}

\begin{abstract}
In this paper we introduce a spectra preserving relation between graphs with loops and graphs without loops. This relation is achieved in two steps. First, by generalizing spectra results got on $(m, k)$-stars to a wider class of graphs, the $(m, k, s)$-stars with or without loops. Second, by defining a covering space of graphs with loops that allows to remove the presence of loops by increasing the graph dimension. The equivalence of the two class of graphs allows to study graph with loops as simple graph without loosing information.
\end{abstract}
%\begin{keyword}
%Graph loop \sep Multigraph \sep Graph reduction \sep Laplacian spectra \sep Laplacian matrices \sep Covering graph
%\MSC 15A18 \MSC 05C50 \MSC 05C75
%\end{keyword}
%\end{frontmatter}

\section{Introduction}

In graph theory there are many areas of social networks and biological networks in which multigraphs (i.e. graphs comprising loops and multiedges) arise more naturally than simple graphs \cite{wasserman1994social, BMSP:BMSP12, ROBINS2013261}. 
Moreover, these multigraph structures also emerge {when several types of graph homomorphisms are applied, such as} aggregation, scaling and blocking procedures, \cite{Shafie15, scott00, Ray:2014:GTA:2788177}.\\
Many extensively used approaches in network analysis only consider {simple graphs}: only single edges between two different vertices are allowed, and  self relations ({\it loops} \cite{biggs:1993, Godsil, Chung97}) are excluded. 
{Moreover, with respect to the graph Laplacian operator, we can consider the case of "generalized graph Laplacians" \cite{LaplEigGraphs07} in which the diagonal terms (corresponding to topological or weighted loops) can be considered as the "potential" of an Hamiltonian operator, useful for example when studying protein structure by network-based approaches starting from their Contact Maps \cite{ProtChemRev13, ProtMenic16}.}
% A  {\it loop} is the most common mathematical object for representing self relations where a vertex is both the sender and receiver of an edge,. \\
In practice, simple graphs are often derived from multigraphs by collapsing multiple edges into a single one and removing the loops \cite{Ray:2014:GTA:2788177, Bondy:1976:GTA:1097029}.
{This procedure is applied since many algorithms and theorems work only for simple graphs}, but these approaches may discard inherent information in the original network. 

%On the other hand, methods properly working for simple graphs does not guarantee to preserve the complete  information of the original multigraph; for example, methods based on the Laplacian matrix properties are useless to provide more information since the Laplacian matrix is invariant with respect presence of loops.\\
% As matter of principle, all the information of a multigraph are retained within the multiple and self relations of vertices. 
% However, this multistructure has not been treated in literature as extensively as well as the simple graphs.\\
In this manuscript, we propose a suitable method to treat graphs with loops as simple graphs, keeping the same eigenvalue spectrum and as much as possible the eigenvectors of their adjacency and transition matrices. 
One of the results of the paper is the possibility to associate a Laplacian matrix to a graph with loops that allows to study its topological properties.\\
Because eigenvalues and eigenvectors describe completely the matrix, by preserving the adjacency (or transition) matrix spectra (eigenvalues and eigenvectors) we maintain the informations and properties of the graphs as much as possible.
In this way, graphs with loops can be studied with tools extensively used for simple graphs.\\
To define the correspondence between graphs and simple graphs, we introduce an extension of the structure and the results discussed in \cite{Andreotti18}; then we build a correspondence between two classes of subgraphs, namely the $(m,k,s)-${\it star with loops} and the $(m,k,s)-${\it star} without loops.\\ %we give a strong tool in order to study graph with loops as a simple graph avoiding the discarding of information.
The paper is organized as follows: after some preliminary remarks (section \ref{sec:2}), in section \ref{sec:3} we generalize the class of $(m,k)$-star in graphs to a wider class of graph, the $(m,k,s)$-star with or without loop, in order to extend the results obtained in \cite{Andreotti18}. 
% By giving conditions on the graph structure which implies the presence of multiple eigenvalues. 
Then, we show a connection between eigenvectors and eigenvalues of graphs with  and  without loops. 
In particular, each vertex with loops can be described as an $(1,k,-)$-star with loop: we will give a useful tool to replace the looped vertices with an $(2,k,s)$-star without loop by maintaining the same spectrum.\\
Thanks to these results it is possible to describe a graph with loop by a graph without loop and to define the Laplacian matrix of the correspondence graph.
Finally, in section \ref{sec:4} we draw some conclusions and discuss an outlook on future developments.

\section{Preliminary definitions}\label{sec:2}

We consider an undirected weighted connected graph $\mathcal G:=(\mathcal V, \mathcal E, w)$, where  
the edges $\mathcal E$ connect $n$ vertexes $\mathcal V$ and $w$ is the edge weight function: $w:\mathcal E\rightarrow \mathbb R^{+}.$
Let $A$ be the weighted adjacency matrix, which is symmetric since the graph is undirected ($A\in Sym_n(\mathbb R^+)$),
$$A_{ij} = \begin{cases} w(i,j), & \mbox{if $i$ is connected to $j$ } (i\sim j) \\
0 & \mbox{otherwise }  \end{cases}$$
where $i,j\in\mathcal V$.\\
Since the graph is not necessarily simple, any diagonal element of $A$ could be nonzero.\\
If the graph $\mathcal G$ is simple,  we introduce the strength diagonal matrix $D$:
$$
D_{ij} = \begin{cases} \sum_{k=1}^n w(i,k), & \mbox{if } i= j\\
0 & \mbox{otherwise }  \end{cases}
$$
and we  define the Laplacian matrix $L\in Sym_n (\mathbb R)$ and normalized Laplacian matrix $\mathcal L\in Sym_n (\mathbb R)$ 
as
$$
L:=D-A, \quad \mathcal L:=D^{-1/2}(D-A)D^{-1/2}.
$$
Whenever we refer to the $k$-th eigenvalue of a Laplacian matrix, we will refer to the $k$-th nonzero eigenvalue according to an increasing order.

Furthermore, we observe that by defining the transition matrix $T$ as $T:=D^{-1}A$ ($T$ defines the transition probabilities of a random walk on the graph) ,
the eigenvalue spectrum  $\sigma(T)$ is related to $\sigma(\mathcal L)$. Indeed $T$ is similar to $\tilde A:=D^{-1/2}AD^{-1/2}$ via the invertible matrix $D^{1/2}$ and
and it is easy to prove that the following statements are equivalent
\begin{enumerate}[label=\textbf{S.\arabic*},ref=S.\arabic*]
\item $v$ is an eigenvector of $\tilde A$ with eigenvalue $\lambda$ 
\item $v^TD^{1/2}$ is a left eigenvector of $T$ with the eigenvalue $\lambda$
\item \label{tildeAL3}$D^{-1/2}v$ is a right eigenvector of $T$ with eigenvalue $\lambda$
\setcounter{enumerConta}{\value{enumi}}
\end{enumerate}
Then we consider the relation between $\sigma(\tilde A)$ and the spectrum $\sigma(\mathcal L)$ using the equivalence of
the following statements 
\begin{enumerate}[label=\textbf{S.\arabic*},ref=S.\arabic*]%[label=\textbf{S.\arabic*}]
\item \label{tildeAL1} $v$ is an eigenvector of $\tilde A$ with eigenvalue $\lambda$ 
\setcounter{enumi}{\value{enumerConta}}
\item \label{tildeAL4} $v$ is an eigenvector of $\mathcal L$ with the eigenvalue $1-\lambda.$
\end{enumerate}
This relation will be very useful later in order to link the Laplacian of graphs with and without loops.\\
%\begin{proof}
%Let $v$ be an eigenvector of $\tilde A$ with eigenvalue $\lambda$, so
%\begin{eqnarray}
%\tilde A v=\lambda v&\Leftrightarrow& D^{-1/2}AD^{-1/2} v =\lambda v\nonumber\\
%&\Leftrightarrow& \Big(I_n -D^{-1/2}AD^{-1/2}\Big) v=(1-\lambda) v \nonumber\\
%&\Leftrightarrow& \mathcal L v=(1-\lambda) v \nonumber
%\end{eqnarray}
%where $I_n$ is the identity matrix of order $n$.
%\end{proof}

For the classical results on Laplacian matrices and their application to network theory, one may refer to \cite{Chung97, Cohen_Havlin:2010, Newman:2010:NI:1809753, Anderson85, MERRIS1994143}.

\section{Definition of $(m,k,s)$-star with and without loop}\label{sec:3}

In the present section,
we define a wider class of weighted $(m,k)$-stars (we refer to it as the {\it weighted $(m,k,s)$-stars}), to generalize the results obtained on multiple eigenvalues of Laplacian matrices, transition and adjacency matrices\cite{Andreotti18}. Then we consider the problem of introducing a correspondence between the class of {\it weighted $(m,k,s)$-stars} and the class of
{\it weighted $(m,k,s)$-stars with loops}. In this way it is possible to remove 
loops from a {\it weighted $(m,k,s)$-stars with loops in graph} by replacing it with {\it weighted $(m,k,s)$-stars in the graph} of increasing size (the increase is the number of loops at most)
without changing the eigenvalue spectrum of adjacency and transition matrices. The section is divided in two subsections where we consider the problem of multiple eigenvalues 
for the adjacency and transition matrices of $(m,k,s)$-stars without and with loops.\\

\subsection{Eigenvalues multiplicity problem for $(m,k,s)$-star}

We recall that a $(m,k)$-star is a graph $\mathcal G=(\mathcal V, \mathcal E,w)$ whose vertex set $\mathcal V$ can be written as the disjointed union of two subsets $\mathcal V_1$ and $\mathcal V_2$ of cardinalities $m$ and $k$ respectively, such that the vertexes in $\mathcal V_1$ have no connections among them, and each of these vertexes is connected
with all the vertexes in $\mathcal V_2$: i.e
$$\forall i\in \mathcal V_1,\forall j\in \mathcal V_2,\quad (i,j)\in \mathcal E$$
$$\forall i,j\in \mathcal V_1, \quad (i,j)\notin \mathcal E.$$

the notation $(m,k)$-star denotes a graph with partitions of cardinality $|\mathcal V_1|=m$ and $|\mathcal V_2|=k$ by $S_{m,k}.$
To extend this definition we weaken the conditions on the connections between the vertexes of $\mathcal V_1$:

\begin{definition}[$(m,k,s)$-star: $S_{m,k,s}$ ]
A $(m,k,s)$-star is a graph $\mathcal G=(\mathcal V, \mathcal E,w)$ whose vertex set $\mathcal V$ can be written as the disjointed union of two subsets $\mathcal V_1$ and $\mathcal V_2$ of cardinalities $m$ and $k$ respectively, $s$ is a number $s\in\{0,1\},$ such that 
$$\forall i\in \mathcal V_1,\forall j\in \mathcal V_2,\quad (i,j)\in \mathcal E$$
$$\mbox{if $s=0$ then }\forall i_1,i_2\in \mathcal V_1, i_1\neq i_2 , \quad  (i_1,i_2)\notin \mathcal E,$$
$$\mbox{ if $s=1$ then $\forall i_1,i_2\in \mathcal V_1, i_1\neq i_2 , \quad (i_1,i_2)\in \mathcal E$}$$. 

By $S_{m,k,s}$ we denote a $(m,k,s)$-star graph of subsets $\mathcal V_1$ and $\mathcal V_2$ of cardinalities $|\mathcal V_1|=m$ and $|\mathcal V_2|=k$.
\end{definition}
In Fig.\ref{smk_star} are shown examples of $(m,k,1)$-star graph and $(m,k,0)$-star graph.

We define a \textit{$(m,k,s)$-star of a graph} $\mathcal G=(\mathcal V, \mathcal E,w)$ as the $(m,k,s)$-star of partitions $\mathcal V_1$, $\mathcal V_2\subset \mathcal V$ such that only the vertexes in $\mathcal V_2$ can be connected with the rest of the graph $\mathcal V\setminus(\mathcal V_1\cup\mathcal V_2)$: i.e.\\
$$\forall i\in \mathcal V_1,\forall j\in \mathcal V_2,\quad (i,j)\in \mathcal E$$
$$\mbox{ if $s=0$ then }\forall i \in \mathcal V_1, \forall j\in \mathcal V\setminus\mathcal V_2,  \quad (i,j)\notin \mathcal E, $$

$$ \mbox{ if $s=1$ then } \forall i \in \mathcal V_1, \forall j\in \mathcal V\setminus(\mathcal V_1 \cup \mathcal V_2),  \quad (i,j)\notin \mathcal E$$
$$\quad\quad \mbox{ and } \forall i,j\in \mathcal V_1, i\neq j, \quad (i,j)\in \mathcal E.$$

\begin{figure}[!!h]\label{smk_star}
%\begin{subfigure}{}
%\includegraphics[width=6cm]{S16.png}\end{subfigure}
\begin{subfigure}{}
\includegraphics[width=5cm]{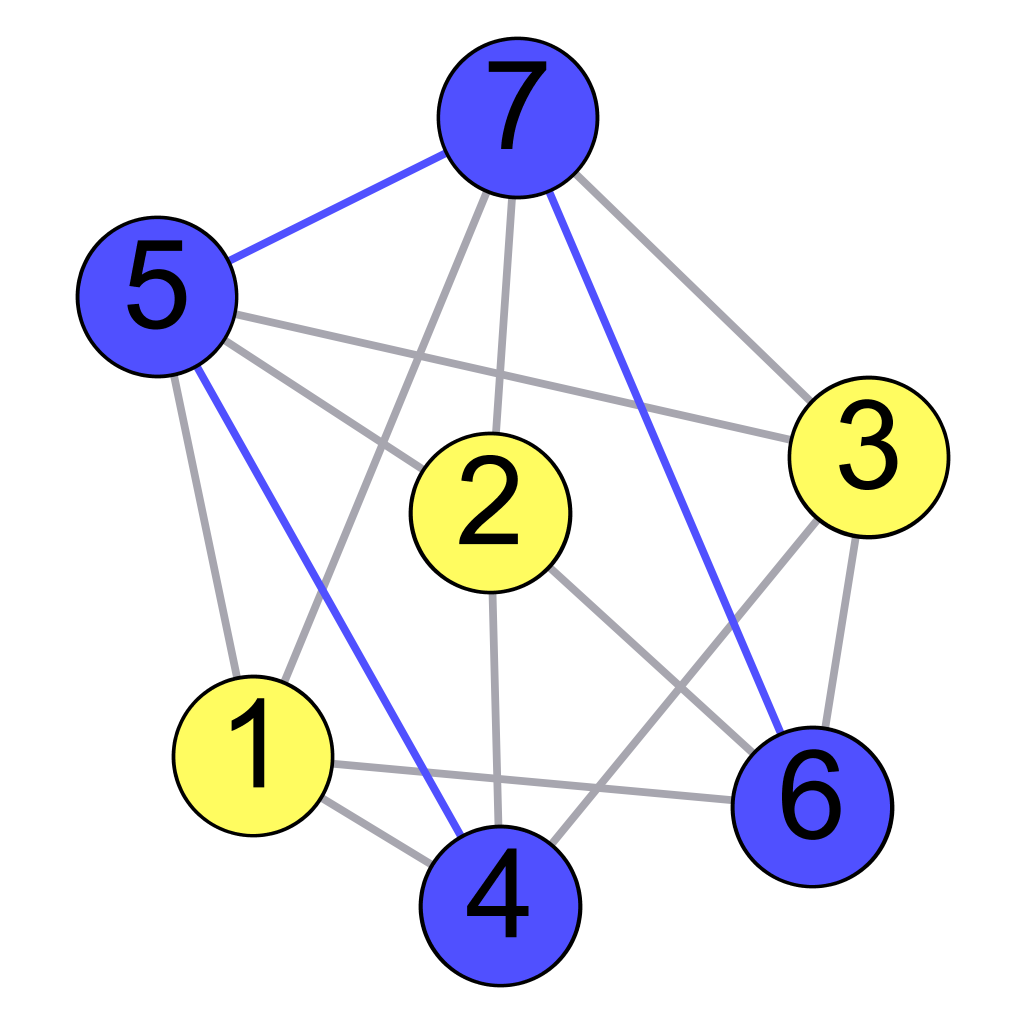}\end{subfigure}
%\begin{subfigure}{}
%\includegraphics[width=6cm]{S63.png}\end{subfigure}
\begin{subfigure}{}
\includegraphics[width=5cm]{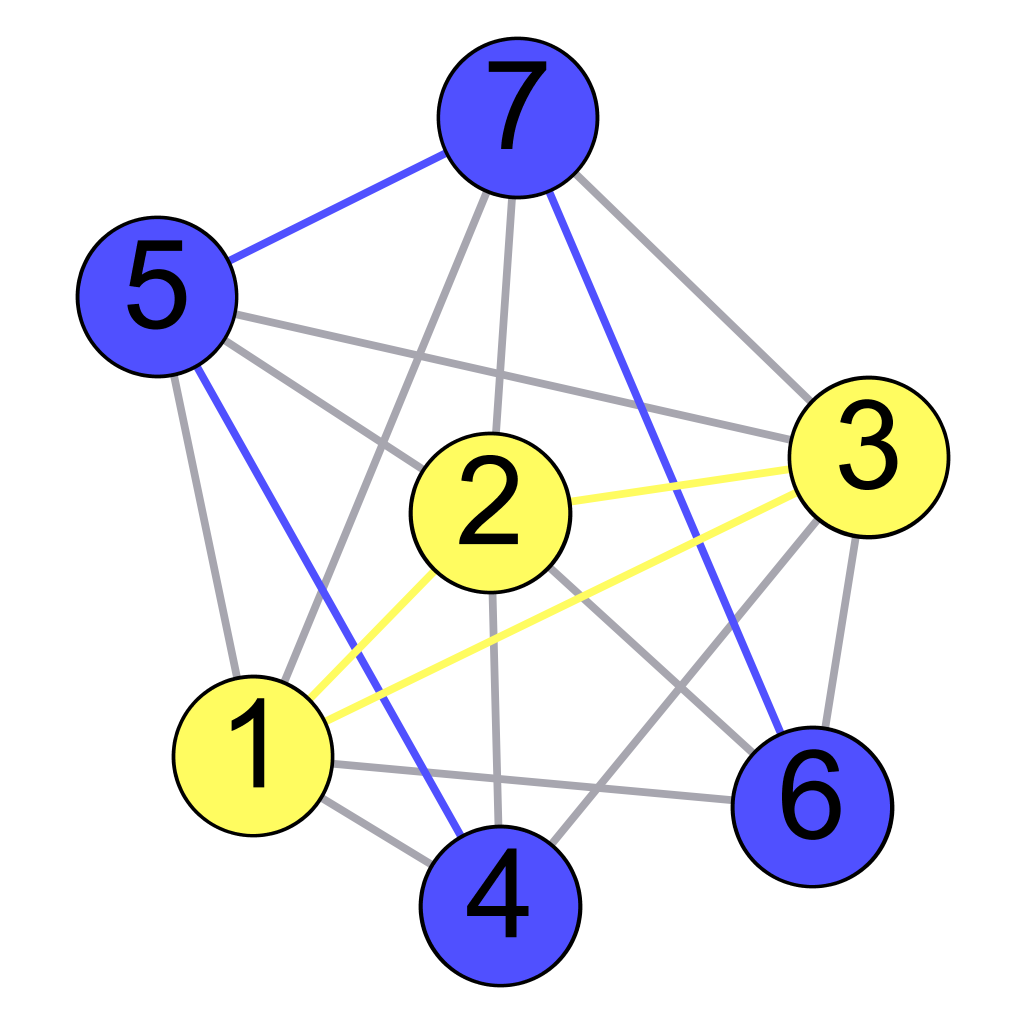}\end{subfigure}
\caption{Left: a $S_{3,4,0}$ graph; right: a $S_{3,4,1}$ graph. The subsets of vertices $\mathcal V_1$ and $\mathcal V_2$ are respectively colored in yellow and blue. 
Grey edges are between vertices belonging to different sets, yellow edges are between vertices in $\mathcal V_1$, and blue edges are between vertices in $\mathcal V_2$}
\end{figure}

By defining the concepts of degree, weight and central weight of a $(m,k,s)$-star we simplify the statement of the theorems on eigenvalues multiplicity.
\begin{definition}[Degree of a $(m,k,s)$-star: $deg(S_{m,k,s})$]
The \textit{degree} of a $(m,k,s)$- star is $deg(S_{m,k,s}):=m-1$ and the degree of a set $\mathcal S$ of $(m,k,s)$-stars, as $m$ and $k$ vary in $\mathbb N$, such that $|\mathcal S|=l,$ is defined as the sum over each $(m,k,s)$-star degree, i.e.
$$deg(\mathcal S):=\sum_{i=1}^l deg(S_{m_i,k_i,s_i}).$$
\end{definition}

\begin{definition}[Weight of a $(m,k,s)$-star: $w(S_{m,k,s})$]
The \textit{weight} of a $(m,k,s)$-star of vertices set $\mathcal V_1\cup\mathcal V_2$ is defined as follows:\\
let $\{i_1,...,i_m\}=\mathcal V_1$, and $w(i_1,j)=...=w(i_m,j), \forall j\in\mathcal V_2$ where all the vertices in $\mathcal V_1$ are connected to each other by links with the same weight, $w(i_p,i_1)=...=w(i_p,i_{p-1})=w(i_p,i_{p+1})=...=w(i_p,i_{m}), \forall i_p\in\mathcal V_1$, then we denote the weight of a $(m,k,s)$-star by $w(S_{m,k,s})$: $$w(S_{m,k,s}):=\sum_{j\in \mathcal V}w(i,j)
\mbox{ for any }i\in\mathcal V_1.$$
\end{definition}

\begin{definition}[Central weight of a $(m,k,s)$-star: $w_c(S_{m,k,s})$]\label{defn:cent}
The \textit{central weight} of a $(m,k,s)$-star of vertices set $\mathcal V_1\cup\mathcal V_2$ is defined as follows:\\
let $\{i_1,...,i_m\}=\mathcal V_1$, and $w(i_1,j)=...=w(i_m,j), \forall j\in\mathcal V_2$ where all the vertices in $\mathcal V_1$ are connected to each other by links with the same weight, $w(i_p,i_1)=...=w(i_p,i_{p-1})=w(i_p,i_{p+1})=...=w(i_p,i_{m}), \forall i_p\in\mathcal V_1$, then we denote the central weight of a $(m,k,s)$-star by $w_c(S_{m,k,s})$: $$w_c(S_{m,k,s}):=w(i,\tilde i)+w(i,i)\mbox{ for any }i,\tilde i\in\mathcal V_1, \ i\neq \tilde i.$$
\end{definition}
In the previous definition the weight of a loop, $w(i,i)$, is clearly set to zero and it is not considered in the present section.\\
Given a graph $\mathcal G=(\mathcal V,\mathcal E,w)$ associated with the Laplacian matrix $L$, and denoting $\sigma(L)$ the set of the
eigenvalues of $L$ and $m_L(\lambda)$ the algebraic multiplicity of the eigenvalue $\lambda$ in $L$, the following theorem holds, which extends the results in \cite{Andreotti18}\\
\begin{theorem}\label{th:one}
Let
\begin{itemize}
\item $S_{m,k,s}$, as $m$ and $k$ vary in $\mathbb N$  and $m+k\leq n,$ be each $(m,k,s)-star$ of $\mathcal G$;
\item  $r$ be the number of $S_{m,k,s}$ with different weight, $w_1,...,w_r$, i.e. $w_i\neq w_j$ for each $i\neq j,$ where $ i,j\in\{1,...,r\};$\\
\end{itemize}
then for any $i\in\{1,...,r\},$
$$\exists \lambda\in{\sigma(L)} \mbox{ such that } \lambda=w_i \mbox{ and } m_{L}(\lambda)\geq deg(\mathcal S_{w_i})$$
where $\mathcal S_{w_i}:=\{S_{m,k,s}\in \mathcal G | w(S_{m,k,s})+w_c(S_{m,k,s})=w_i\}$.
\end{theorem}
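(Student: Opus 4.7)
The plan is to exhibit, for each $(m,k,s)$-star $S\in\mathcal S_{w_i}$, an explicit $(m-1)$-dimensional subspace of eigenvectors of $L$ with eigenvalue $w_i$, and then to argue that the subspaces arising from different stars in $\mathcal S_{w_i}$ are linearly independent, so that the multiplicities add up to $\deg(\mathcal S_{w_i})$.

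Fix $S=S_{m,k,s}$ with $\mathcal V_1=\{i_1,\dots,i_m\}$ and $\mathcal V_2$, and consider the subspace
\[
\mathcal W_S=\Bigl\{\,v\in\mathbb R^{|\mathcal V|}\;:\;\mathrm{supp}(v)\subseteq\mathcal V_1,\ \sum_{p=1}^m v_{i_p}=0\,\Bigr\},
\]
which has dimension $m-1=\deg(S_{m,k,s})$. I would verify $Lv=w_i v$ for $v\in\mathcal W_S$ by splitting the index set into three cases. \emph{Case $i\in\mathcal V\setminus(\mathcal V_1\cup\mathcal V_2)$:} by the defining property of a star of $\mathcal G$, such $i$ has no edge to any vertex in $\mathcal V_1$, so $(Lv)_i=0=w_i v_i$. \emph{Case $i\in\mathcal V_2$:} since $v$ is $0$ on $\mathcal V_2$ and on the rest, we get $(Lv)_i=-\sum_{p}A_{i,i_p}v_{i_p}$; uniformity of the weight $w(i_p,i)$ across $p$ and the condition $\sum_p v_{i_p}=0$ make this vanish. \emph{Case $i=i_p\in\mathcal V_1$:} the only surviving contributions are $(Lv)_{i_p}=D_{i_p i_p}v_{i_p}-\sum_{q\neq p}A_{i_p i_q}v_{i_q}$. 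For $s=0$ the second sum is empty and $D_{i_p i_p}=w(S_{m,k,s})$, while $w_c(S_{m,k,s})=0$ by Definition~\ref{defn:cent}. For $s=1$ the weights $w(i_p,i_q)$ are all equal to some $\alpha$, so $\sum_{q\neq p}A_{i_p i_q}v_{i_q}=\alpha(-v_{i_p})$ using $\sum_q v_{i_q}=0$, which adds $\alpha=w_c(S_{m,k,s})$ to the diagonal contribution. In both cases $(Lv)_{i_p}=(w(S_{m,k,s})+w_c(S_{m,k,s}))\,v_{i_p}=w_i v_{i_p}$.

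Next, to assemble the multiplicity bound, I would observe that distinct $(m,k,s)$-stars of $\mathcal G$ in the sense of the definition have disjoint leaf-sets $\mathcal V_1$ (two vertices of $\mathcal V_1$-type with the same external neighborhood could always be merged into a single star, so the enumeration is naturally over stars with disjoint $\mathcal V_1$'s). Consequently the subspaces $\mathcal W_S$ for $S\in\mathcal S_{w_i}$ have pairwise disjoint supports, hence are linearly independent, and their direct sum is contained in the $w_i$-eigenspace of $L$:
\[
m_L(w_i)\ \ge\ \dim\Bigl(\bigoplus_{S\in\mathcal S_{w_i}}\mathcal W_S\Bigr)\ =\ \sum_{S\in\mathcal S_{w_i}}(m_S-1)\ =\ \deg(\mathcal S_{w_i}).
\]

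The main obstacle I anticipate is the bookkeeping in Case~$i=i_p$ for the $s=1$ subcase: one must carefully separate the contribution of the intra-$\mathcal V_1$ edges in the diagonal $D_{i_p i_p}$ from the off-diagonal cancellation produced by the zero-sum constraint, and then identify the remainder as exactly $w_c(S_{m,k,s})$ so that the eigenvalue matches the definition $w_i=w(S)+w_c(S)$ used for $\mathcal S_{w_i}$. A secondary subtlety, worth a remark rather than a calculation, is making explicit the convention that the stars considered are taken with disjoint $\mathcal V_1$'s; otherwise the counting on the right-hand side has to be interpreted carefully to avoid double-counting eigenvectors.
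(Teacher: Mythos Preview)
Your argument is correct and identifies exactly the same $(m-1)$-dimensional eigenspace that the paper uses, but the packaging differs. The paper proves the adjacency-matrix version (Lemma~\ref{lemma:one}) by shifting: one forms $\hat A=A+w_c I$, observes that the $m$ rows indexed by $\mathcal V_1$ become identical, and concludes via a determinant/rank argument that $0\in\sigma(\hat A)$ with multiplicity at least $m-1$, hence $-w_c\in\sigma(A)$; Theorem~\ref{th:one} is then asserted by the analogous shift for $L$. Your route is more explicit: you write down the eigenvectors (zero-sum vectors supported on $\mathcal V_1$) and verify $Lv=w_i v$ coordinatewise. The two arguments are equivalent---your $\mathcal W_S$ is precisely the kernel of the paper's shifted matrix---but your version has the advantage of producing the eigenvectors directly and making transparent why the eigenvalue is $w(S)+w_c(S)$ rather than leaving that to an ``analogous'' computation. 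Your remark about disjointness of the $\mathcal V_1$-sets is also a point the paper handles only implicitly through its indexing scheme.
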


In order to prove our statement, we use the following Lemma on weighted adjacency matrix $A$:
\begin{lemma}\label{lemma:one}
let
\begin{itemize}
\item $S_{m,k,s}$, as $m$ and $k$ vary in $\mathbb N$  and $m+k\leq n,$ be each $(m,k,s)-star$ of $\mathcal G$;
\item  $r$ be the number of $S_{m,k,s}$ with different weight, $w_1,...,w_r$, i.e. $w_i\neq w_j$ for each $i\neq j,$ where $ i,j\in\{1,...,r\};$\\
\end{itemize}
then for any $i\in\{1,...,r\},$
$$\exists \lambda\in{\sigma(A)} \mbox{ such that } \lambda=-w_i \mbox{ and } m_{A}(\lambda)\geq deg(\mathcal S_{w_i})$$
where $\mathcal S_{w_i}:=\{S_{m,k,s}\in \mathcal G | w_c(S_{m,k,s})=w_i\}$.
\end{lemma}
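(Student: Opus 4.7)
The plan is to prove the lemma by an explicit construction of eigenvectors supported on the $\mathcal V_1$-parts of the $(m,k,s)$-stars in $\mathcal S_{w_i}$. Fix a single $(m,k,s)$-star $S$ of $\mathcal G$ with $\mathcal V_1=\{i_1,\dots,i_m\}$, $\mathcal V_2$ of size $k$, and central weight $w_c(S)=w_i$. I would look for eigenvectors of $A$ lying in the subspace
$$W_S:=\Bigl\{v\in\mathbb R^{|\mathcal V|}:\ \operatorname{supp}(v)\subseteq\mathcal V_1,\ \sum_{p=1}^m v_{i_p}=0\Bigr\},$$
which is $(m-1)$-dimensional, exactly $\deg(S)$.

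The key computation is to show $Av=-w_i v$ for every $v\in W_S$, splitting into three cases for the coordinate index. For $j\in\mathcal V\setminus(\mathcal V_1\cup\mathcal V_2)$, the $(m,k,s)$-star-of-a-graph definition forces $A_{ji}=0$ for all $i\in\mathcal V_1$, so $(Av)_j=0=-w_iv_j$. For $j\in\mathcal V_2$, all weights $w(i_p,j)$ coincide by the uniformity clause in the definition of the weight of a $(m,k,s)$-star, so $(Av)_j=w(i_1,j)\sum_p v_{i_p}=0=-w_iv_j$. For $j=i_p\in\mathcal V_1$, I use $A_{i_pi_p}=0$ (loops are excluded in this section by Definition \ref{defn:cent}) and split off the sum over $\mathcal V_1$: if $s=0$ there are no internal edges and $w_c(S)=0$, giving $(Av)_{i_p}=0=-w_iv_{i_p}$; if $s=1$ the internal weights all equal $w_c(S)=w_i$, so
$$(Av)_{i_p}=w_i\sum_{q\neq p}v_{i_q}=w_i\bigl(0-v_{i_p}\bigr)=-w_iv_{i_p}.$$
Either way $Av=-w_iv$ on $W_S$, so $W_S$ lies in the eigenspace of $A$ associated with $-w_i$.

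To conclude, I would sum the contributions across $\mathcal S_{w_i}$: if the $\mathcal V_1$-parts of distinct stars in $\mathcal S_{w_i}$ are vertex-disjoint, then $\sum_{S\in\mathcal S_{w_i}} W_S$ is a direct sum, yielding
$$m_A(-w_i)\ \geq\ \sum_{S\in\mathcal S_{w_i}}\dim W_S\ =\ \sum_{S\in\mathcal S_{w_i}}\deg(S)\ =\ \deg(\mathcal S_{w_i}),$$
which is the claim.

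The main obstacle I anticipate is the linear independence step when the $\mathcal V_1$-parts of two different $(m,k,s)$-stars share vertices: in that case the naive direct sum argument breaks. I would handle this either by restricting implicitly (as in \cite{Andreotti18}) to a collection of vertex-disjoint maximal $(m,k,s)$-stars representing $\mathcal S_{w_i}$, or by showing that when two stars share a $\mathcal V_1$-vertex the construction can be rearranged into zero-sum vectors on the union of their $\mathcal V_1$'s, still giving the required total dimension. The Laplacian theorem is a corollary, since for vectors $v\in W_S$ one has $Dv$ computable explicitly from the uniform weight $w(S_{m,k,s})$ on $\mathcal V_1$, so $Lv=(D-A)v$ becomes $(w(S)+w_c(S))v$, matching the eigenvalue $w_i$ required in Theorem \ref{th:one}.
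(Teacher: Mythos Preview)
Your proof is correct and follows essentially the same approach as the paper: both identify the eigenspace for $-w_i$ as containing the zero-sum vectors supported on $\mathcal V_1$, and both treat the multi-star case by summing over (implicitly disjoint) $\mathcal V_1$-parts. The only difference is cosmetic---the paper phrases the single-star step via the shift $\hat A:=A+w_c(S_{m,k,s})I_n$, observing that the first $m$ rows of $\hat A$ coincide so that $\dim\ker\hat A\geq m-1$, whereas you verify $Av=-w_iv$ directly by a coordinate-wise case analysis; these are dual formulations of the same computation.
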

where $\sigma(A)$ denotes the spectrum of $A$ and $m_A(\lambda)$ is the algebraic multiplicity of $\lambda$.
\begin{proof}
Without loss of generality we consider only connected graphs; 
indeed, if a graph is not connected the same result holds, since the $(m,k,s)$-star degree of the graph is the sum of the star degrees of the connected components and the characteristic polynomial of $L$ is the product of the characteristic polynomials of the connected components. \\
Let a $(m,k,s)$-star of the graph $\mathcal G$. 
Under a suitable permutation of the rows and columns of the weighted adjacency matrix $A$, we can label the vertexes in $\mathcal V_1$ with the indexes $1,...,m$, and the vertexes in $\mathcal V_2$ with the indexes $m+1,...,m+k$.\\
Let $v_1(A),...,v_m(A)$ be the rows corresponding to vertexes in $\mathcal V_1$, then the adjacency matrix has the following form
\[ A = \left( \begin{array}{cccc|cccccc}
 0 & w(1,2) &... & w(1,m) &  w(1,m+1)  &...& w(1,m+k)& 0&...&0\\
w(2,1) & 0 & \ddots & \vdots & \vdots  &...& \vdots & 0&...&0\\
 \vdots &\ddots & \ddots & w(m-1,m) &  \vdots & ...& \vdots & 0&...&0\\
 w(m,1)  & ...& w(m,m-1) & 0 &  w(m,m+1)  &...& w(m,m+k)& 0&...&0\\
\hline
w(1,m+1)& ...& ...  &  w(m,m+1)  &  &  & & & &\\
  \vdots & ... & ... & \vdots &  &  & & & &\\
 w(1,m+k)& ... & ... &  w(m,m+k) &  &  & & & & \\
 0& ... & ... & 0 &  &  & & & & \\
 \vdots& ... & ... & \vdots &  &  A_{22} & & & \\
 0& ... & ...&  0 &  &  & & & &  \\
\end{array} \right)\]
where the block $A_{22}$ is any $(n-m)\times(n-m)$ symmetric matrix with zero diagonal and nonnegative elements.\\ 
Because $w(1,2)=...=w(1,m)=w(2,3)=...=w(2,m)=...=w(m-1,m)=w_c(S_{m,k,s})$ the matrix $\hat A:=A+w_c(S_{m,k,s})I_n$ has $m$ rows (and $m$ columns) $v_1(\hat A),...,v_m(\hat A)$ linearly dependent such that $v_1(\hat A)=...=v_m(\hat A)$, then $v_1(\hat A),...,v_{m-1}(\hat A)\in ker(\hat A)$.\\
Hence
$$\exists \mu_1,...,\mu_{m-1}\in\sigma(\hat A)\quad \mbox{ such that }\quad \mu_1=...=\mu_{m-1}=0.$$
Let $\mu_i$ be one of these eigenvalues, then
$$
0=det((A+w_c(S_{m,k,s}) I_n)-\mu_i I_n)=det(A-(-w_c(S_{m,k,s}) +\mu_i )I_n)
$$
so that $\lambda:=-w_c(S_{m,k,s})\in\sigma (A)$ with multiplicity greater or equal to $deg(S_{m,k,s})$.\\
Let $p$ be the number of $S_{m,k,s}$ in the graph $\mathcal G$ that we indicate by $S_{m_1,k_1,s_1},...,S_{m_p,k_p,s_p}$.
Denoting $w^1_c,...,w^r_c$ the different central weights of such $(m,k,s)$-stars, and $r\leq p$, we prove that for any $i\in\{1,...,r\},$\\
$$
\exists \lambda\in{\sigma(A)} \mbox{ such that } \lambda=-w^i_c 
$$
$$
\mbox{ and the multiplicity of } \lambda\geq deg(\mathcal S_{w_c^i})=
\sum_{S_{m_j,k_j}\in\mathcal S_{w_c^i}} deg(S_{m_j,k_j,s_j}),
$$
where $\mathcal S_{w_c^i}:=\{S_{m,k,s}\in \mathcal G | w_c(S_{m,k,s})=w_c^i\}$.

Let $R_i$, with $i\in\{1,...,r\}$, be the number of $(m,k,s)$-stars in $\mathcal  S_{w_c^i}$, and $\sum_{i=1}^r R_r=p$,
we assume that the first $R_1$ indexes, namely $1,...,R_1$, refer to the $(m,k,s)$-stars in $\mathcal  S_{w_c^1}$,  where the indexes $R_1+1,...,R_1+R_2$ refer to the $(m,k,s)$-stars in
$\mathcal  S_{w_c^2}$, and so on.

We focus on the $R_i$ $(m,k,s)$-stars in $\mathcal  S_{w_c^i}$.
The rows in $\hat A:=A+w_c^iI_n$ corresponding to the vertexes $x_j$ in $\mathcal V_1(\mathcal S_{w_c^i})$ with $j\in\{\sum_{q=1}^{i-1} R_q+1,...,\sum_{q=1}^{i} R_q\}$,
are $m_j$ vectors $(v^{(j)}_{j_1}(\hat A),...,v^{(j)}_{j_{m_j}}(\hat A))$, linearly dependent and such that $v^{(j)}_{j_1}(\hat A)=...=v^{(j)}_{j_{m_j}}(\hat A)$,
whose indexes are
$$
j_1=\sum_{t=1}^{j-1} m_{t}+1,...,{j_{m_j}}=\sum_{t=1}^{j-1} m_{t}+m_j
$$
when $j>1$, or
$$
j_1=1,...,{j_{m_j}}=m_j
$$
when $j=1$.\\
Then we get
$$v^{(j)}_{j_1}(\hat A),...,v^{(j)}_{j_{{m_j}-1}}(\hat A)\in ker(\hat A),\quad \forall j\in\{\sum_{q=1}^{j-1} R_q+1,...,\sum_{q=1}^{j} R_q\}$$
and
$$\exists \mu_{j_1},...,\mu_{j_{{m_j}-1}}\in\sigma(\hat A)\quad \mbox{ such that }\quad \mu_{j_1}=...=\mu_{j_{{m_j}-1}}=0.$$

This is true for each $j\in\{\sum_{q=1}^{j-1} R_q+1,...,\sum_{q=1}^{j} R_q\}$, so that
$$\exists \mu_1,...,\mu_{deg(\mathcal S_{w_c^i})} \in\sigma(\hat A)\quad \mbox{ such that }\quad \mu_1=...=\mu_{deg(\mathcal S_{w^i_c})}=0.$$

%and the matrix $\hat A$ has at least $deg(\mathcal S_{w^i_c})+R_i$ diagonal entries with value $w_c^i$.\\

%In the matrix $(A+w_c^i I_n)$ there are $v^{(j)}_{j_q}(A+w_c^i I_n), \ q\in\{1,...,m_j\}$ vectors linearly dependent for each $j$, as a consequence
% $v^{(j)}_{j_1}(A+w_c^i I_n),...,v^{(j)}_{j_{m_j-1}}(A+w_c^i I_n)\in ker(A+w_c^i I_n)$ and
%$$\exists \mu_1,...,\mu_{deg(\mathcal S_{w_i})} \in\sigma(A+w_i I)\quad \mbox{ such that }\quad \mu_1=...=\mu_{deg(\mathcal S_{w_i})} =0.$$

Finally, let $\mu_t$ be one of these eigenvalues, then

$$0=det((A+w_c^i I_n)-\mu_t I_n)=det(A-(-w_c^i +\mu_t )I_n)$$

and $\lambda:=-w_c^i\in\sigma (A)$ with multiplicity greater or equal to $deg(\mathcal S_{w_c^i})$.\\
\end{proof}

The proof for the Laplacian version of the Lemma \ref{lemma:one} is similar to that for the adjacency matrix: using the same arguments as in the proof of \ref{lemma:one} we can say that the Theorem \ref{th:one} is true. \\
In Fig.\ref{smk_2} an example of a graph with an $(m,k,s)$-stars is shown. In this example the Laplacian matrix has an eigenvalue $\lambda=6$ with multiplicity 2.

Some corollaries on the signless and normalized Laplacian matrices can be obtained by using similar proofs.
Let $B$ and $\mathcal L$ be the signless and normalized Laplacian matrices of $\mathcal G=(\mathcal V,\mathcal E,w)$ respectively
and $\sigma(B)$, $\sigma(\mathcal L)$ the spectrum of $B$ and $\mathcal L$ with algebraic multiplicity
$m_B(\lambda)$, $m_{\mathcal L}(\lambda)$ for the eigenvalue $\lambda$ in $B$ and $\mathcal L$ respectively.\\
\begin{corollary}
If
\begin{itemize}
\item $S_{m,k,s}$, as $m$ and $k$ vary in $\mathbb N$  and $m+k\leq n,$ is each $(m,k,s)-star$ of $\mathcal G$;
\item $r$ is the number of $S_{m,k,s}$ with different weights, $w_1,...,w_r$,\\
\end{itemize}
then for any $i\in\{1,...,r\},$
$$\exists \lambda\in{\sigma(B)} \mbox{ such that } \lambda=w_i \mbox{ and } m_B(\lambda)\geq deg(\mathcal S_{w_i})$$
where $\mathcal S_{w_i}:=\{S_{m,k,s}\in \mathcal G | w(S_{m,k,s})-w_c(S_{m,k,s})=w_i\}$.
\end{corollary}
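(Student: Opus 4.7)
The plan is to mirror the proof of Lemma \ref{lemma:one} almost verbatim, but with a shift chosen so that the signless Laplacian $B=D+A$ (rather than just $A$) has equal rows on the $\mathcal V_1$ block of each $(m,k,s)$-star. The key observation is that for any vertex $i\in\mathcal V_1$ of an $(m,k,s)$-star $S$, the diagonal entry of $B$ is $D_{ii}=w(S)$ (by definition of the weight), while for $i_1\neq i_2$ in $\mathcal V_1$ the off-diagonal entry $B_{i_1 i_2}=A_{i_1 i_2}$ equals $w_c(S)$ when $s=1$ and equals $0$ when $s=0$, in which case also $w_c(S)=0$. In both cases, comparing row $i_1$ and row $i_2$ of $B$ on the two columns $i_1,i_2$ forces the shift
\[
c \;=\; w(S) - w_c(S),
\]
which is exactly the quantity $w_i$ appearing in the statement.

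First, I would fix a single $(m,k,s)$-star $S$ in $\mathcal G$, apply the same block permutation as in Lemma \ref{lemma:one}, and define $\hat B := B - w_i I_n$ with $w_i = w(S)-w_c(S)$. A direct inspection of the first $m$ rows shows that on the diagonal they all become $w_c(S)$, on the $\mathcal V_1$ block they all become $w_c(S)$ (or $0$) depending on $s$, on the $\mathcal V_2$ block they all coincide (because all vertices in $\mathcal V_1$ have identical weights to each vertex of $\mathcal V_2$), and on $\mathcal V\setminus(\mathcal V_1\cup\mathcal V_2)$ they are all zero. Hence the $m$ rows of $\hat B$ indexed by $\mathcal V_1$ are identical, yielding $m-1 = \deg(S_{m,k,s})$ linearly dependent rows in $\ker(\hat B)$, and therefore
\[
\det\bigl(B - w_i I_n - \mu I_n\bigr) = 0 \text{ for } \mu=0,
\]
so $w_i \in \sigma(B)$ with multiplicity at least $\deg(S_{m,k,s})$.

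Next, I would extend to the set $\mathcal S_{w_i}$ exactly as in the second part of the proof of Lemma \ref{lemma:one}: group the $(m,k,s)$-stars whose shifted quantity $w(S)-w_c(S)$ equals $w_i$, and note that the row-equality argument above is purely local to each star (the identification of rows inside one star uses only the columns in $\mathcal V_1\cup\mathcal V_2$ of that star, and is not disturbed by the other stars since vertices of $\mathcal V_1$ of distinct stars are not neighbours of each other). Summing the individual contributions produces $\sum_{S_{m_j,k_j,s_j}\in\mathcal S_{w_i}}(m_j-1) = \deg(\mathcal S_{w_i})$ independent elements in $\ker(\hat B)$, giving the claimed multiplicity.

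The only genuinely new step with respect to Lemma \ref{lemma:one} is identifying the correct shift, and this is the main obstacle: one must verify that $c=w(S)-w_c(S)$ works uniformly for $s\in\{0,1\}$ and is consistent across different stars sharing the same $w_i$. Once that computation is in place, the rest of the argument (block-permutation, row identification, kernel dimension count, passage from $\det(\hat B-\mu I)$ to $\sigma(B)$) is an almost mechanical transcription of the adjacency case, with the sign change reflecting the replacement $A \leadsto D+A$.
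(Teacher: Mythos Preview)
Your proposal is correct and follows exactly the route the paper indicates: the paper does not give an explicit proof of this corollary but states that it ``can be obtained by using similar proofs'' to Lemma~\ref{lemma:one}, and your argument is precisely that transcription, with the shift $c=w(S)-w_c(S)$ being the only new computation (which you carry out correctly for both $s=0$ and $s=1$).
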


\begin{corollary}
If
\begin{itemize}
\item $S_{m,k,s}$, as $m$ and $k$ vary in $\mathbb N$  and $m+k\leq n,$ is each $(m,k,s)-star$ of $\mathcal G$;
\item $r$ is the number of $S_{m,k,s}$ with different weights, $w_1,...,w_r$,\\
\end{itemize}
then for any $i\in\{1,...,r\},$
$$\exists \lambda\in{\sigma(\mathcal L)} \mbox{ such that } \lambda=1+w_i \mbox{ and } m_{\mathcal L}(\lambda)\geq \sum_{i=1}^r deg(\mathcal S_{w_i})$$
where $\mathcal S_{w_i}:=\{S_{m,k,s}\in \mathcal G | \displaystyle\frac{w_c(S_{m,k,s})}{w(S_{m,k,s})}=w_i\}$.
\end{corollary}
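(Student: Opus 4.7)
The plan is to reduce the claim on $\mathcal L$ to an analogous claim on the symmetric normalized matrix $\tilde A := D^{-1/2} A D^{-1/2}$, and then replay the row-equality argument of Lemma \ref{lemma:one}. Indeed, by the equivalence \ref{tildeAL1}$\Leftrightarrow$\ref{tildeAL4}, if $\mu \in \sigma(\tilde A)$ with algebraic multiplicity $m$, then $1 - \mu \in \sigma(\mathcal L)$ with at least the same multiplicity. It therefore suffices to show that, for the common value $w_i = w_c(S_{m,k,s})/w(S_{m,k,s})$ associated to a class $\mathcal S_{w_i}$, one has $-w_i \in \sigma(\tilde A)$ with multiplicity at least $deg(\mathcal S_{w_i})$.

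I fix a single $(m,k,s)$-star in $\mathcal S_{w_i}$ and label its $\mathcal V_1$-vertices $i_1, \ldots, i_m$ as in the proof of Lemma \ref{lemma:one}. By the definition of an $(m,k,s)$-star, every $i_p \in \mathcal V_1$ has the same strength $D_{i_p i_p} = w(S_{m,k,s})$, the same weight $a$ to each vertex of $\mathcal V_2$, and either weight $0$ (if $s=0$) or a common weight $b = w_c(S_{m,k,s})$ (if $s=1$) to each other $\mathcal V_1$-vertex. Dividing by $\sqrt{D_{i_p i_p} D_{jj}}$ and exploiting that all $\mathcal V_1$-vertices of the star share the same strength, one checks that the rows $v_{i_1}(\tilde A), \ldots, v_{i_m}(\tilde A)$ agree everywhere except possibly on the diagonal block indexed by $\mathcal V_1$: in row $i_p$ the diagonal entry is $0$, while the off-diagonal $\mathcal V_1$-entries equal $b/w(S_{m,k,s})$ when $s=1$ (and equal $0$ when $s=0$). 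In both cases the common off-diagonal value is precisely $w_i$.

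The central step is then the diagonal shift $\hat A := \tilde A + w_i I_n$. In $\hat A$ the $m$ rows indexed by $\mathcal V_1$ become identical, so that $v_{i_1}(\hat A) = \cdots = v_{i_m}(\hat A)$, and the differences $v_{i_1}(\hat A) - v_{i_t}(\hat A)$ for $t = 2, \ldots, m$ produce $m - 1 = deg(S_{m,k,s})$ linearly independent elements of $\ker(\hat A)$. Repeating this construction for every star in $\mathcal S_{w_i}$, and observing that the $\mathcal V_1$-sets of distinct $(m,k,s)$-stars of $\mathcal G$ are disjoint (so the resulting kernel vectors have pairwise disjoint supports in $\mathcal V_1$), one obtains $\dim \ker(\hat A) \geq deg(\mathcal S_{w_i})$, which is exactly $m_{\tilde A}(-w_i) \geq deg(\mathcal S_{w_i})$.

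Translating back through \ref{tildeAL1}$\Leftrightarrow$\ref{tildeAL4} yields $1 + w_i \in \sigma(\mathcal L)$ with multiplicity at least $deg(\mathcal S_{w_i})$, which summed over the distinct ratios $w_1, \ldots, w_r$ recovers the bound stated in the corollary. The only delicate point is the bookkeeping that makes the shifted $\mathcal V_1$-rows of $\tilde A$ exactly equal in both cases $s = 0$ and $s = 1$; the key observation is that the normalizing factor $\sqrt{D_{i_p i_p} D_{i_q i_q}}$ collapses to $w(S_{m,k,s})$ because all $\mathcal V_1$-vertices of a given star share the same strength, and this is what turns the raw weight ratio $b/w(S_{m,k,s})$ into the intended quantity $w_i$. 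Beyond this verification, the argument is essentially the one already given for Lemma \ref{lemma:one}.
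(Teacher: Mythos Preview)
Your proposal is correct and follows essentially the same route the paper indicates: the paper does not give a separate proof for this corollary but says it ``can be obtained by using similar proofs'' to Lemma~\ref{lemma:one}, and your argument is precisely that---replay the row-equality/diagonal-shift argument on $\tilde A=D^{-1/2}AD^{-1/2}$ and then transfer to $\mathcal L$ via \ref{tildeAL1}$\Leftrightarrow$\ref{tildeAL4}. The only cosmetic point is that when you write ``the differences $v_{i_1}(\hat A)-v_{i_t}(\hat A)$ produce \ldots\ elements of $\ker(\hat A)$'' you presumably mean the standard-basis differences $e_{i_1}-e_{i_t}$ (equal columns, by symmetry, give these kernel vectors); the paper's own proof of Lemma~\ref{lemma:one} uses the same loose phrasing.
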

From relations \ref{tildeAL1}, \ref{tildeAL4}, in the above situation we also have that
$$\exists \lambda\in{\sigma( T)} \mbox{ such that } \lambda=w_i \mbox{ and } m_{T}(\lambda)\geq \sum_{i=1}^r deg(\mathcal S_{w_i}).$$

\begin{figure}[!!h]\label{smk_2}
\centering
\includegraphics[width=8cm]{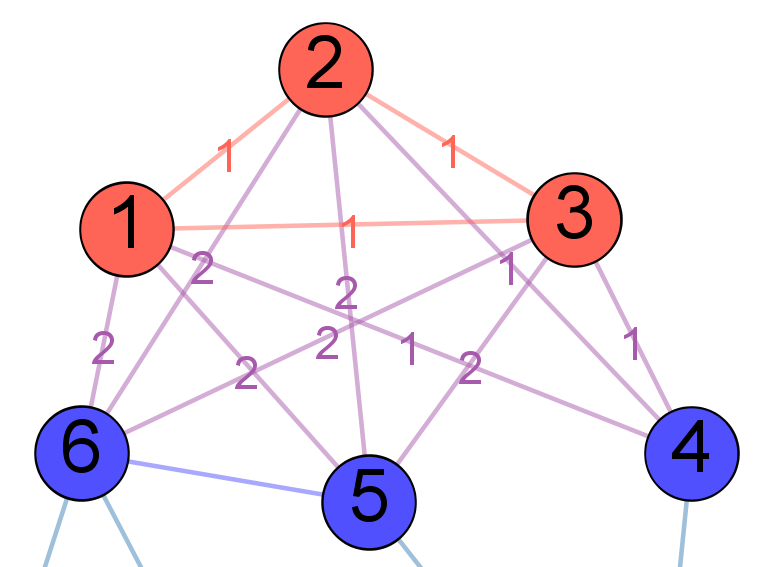}\caption{A $S_{3,3,1}$ in a graph, where the subsets $\mathcal V_1$ (red vertices) and $\mathcal V_2$ (blue vertices) are respectively with cardinality $m=3$ and $ k=3$. 
The weights of the edges between vertices belonging to $\mathcal V_1$ are colored in red, the weights of the edges between vertices belonging to two different sets are colored in purple. 
In the Laplacian matrix there is an eigenvalue $\lambda=6$ with multiplicity 2.}
\end{figure}

We observe that when $s=0$, and thus $w_c=0$, each of the above results can be reduced to the results obtained in \cite{Andreotti18}.
%%%%%%%%%%%%%%%%%%%%%%%%%%%%%%%%%%%%%

\subsection{Eigenvalues multiplicity problem for $(m,k,s)$-star with loops}

In this section we consider $(m,k,s)$-star with loops and we generalize the results discussed in the previous section.
Some definitions are useful:

\begin{definition}[$(m,k,s)$-star with loop: $\accentset{\circ}{S}_{m,k,s}$ ]

A $(m,k,s)$-star with loops is a $(m,k,s)$-star in which each vertex in the set $\mathcal V_1$ has a loop.
A $(m,k,s)$-star denotes a graph with partitions of cardinality $|\mathcal V_1|=m$ and $|\mathcal V_2|=k$ by $\accentset{\circ}{S}_{m,k,s}.$
\end{definition}

We define a \textit{$(m,k,s)$-star with loops of a graph} $\mathcal G=(\mathcal V, \mathcal E,w)$ as the $(m,k,s)$-star with loop of partitions $\mathcal V_1$, $\mathcal V_2\subset \mathcal V$ such that 
$$\forall i\in \mathcal V_1,\forall j\in \mathcal V_2\cup\{i\},\quad (i,j)\in \mathcal E$$
$$\mbox{ if $s=0$ then }\forall i \in \mathcal V_1, \forall j\in \mathcal V\setminus(\mathcal V_2\cup\{i\}),  \quad (i,j)\notin \mathcal E, $$

$$ \mbox{ if $s=1$ then } \forall i \in \mathcal V_1, \forall j\in \mathcal V\setminus(\mathcal V_1 \cup \mathcal V_2),  \quad (i,j)\notin \mathcal E$$
$$\quad\quad \mbox{ and } \forall i,j\in \mathcal V_1, \quad (i,j)\in \mathcal E$$

In other words, a \textit{$(m,k,s)$-star with loops of a graph} $\mathcal G=(\mathcal V, \mathcal E,w)$ is a $(m,k,s)$-star of a graph $\mathcal G$ in which each vertex in the set $\mathcal V_1$ has a loop.

By defining the degree, weight and central weight of a $(m,k,s)$-star with loop as in the previous section we simplify the stating of the theorems on eigenvalues multiplicity.
For the $(m,k,s)$-stars with loops the Lemma \ref{lemma:one} is modified as follows
\begin{lemma}\label{lemma:two}
Let
\begin{itemize}
\item $\accentset{\circ}{S}_{m,k,s}$, as $m$ and $k$ vary in $\mathbb N$  and $m+k\leq n,$ be each $(m,k,s)-star$ with loops  of $\mathcal G$;
\item  $r$ be the number of $\accentset{\circ}{S}_{m,k,s}$ with different weights, $w_1,...,w_r$, i.e. $w_i\neq w_j$ for each $i\neq j,$ where $ i,j\in\{1,...,r\};$\\
\end{itemize}
then for any $i\in\{1,...,r\},$
$$\exists \lambda\in{\sigma(A)} \mbox{ such that } \lambda=-w_i \mbox{ and } m_{A}(\lambda)\geq deg(\mathcal S_{w_i})$$
where $\mathcal S_{w_i}:=\{\accentset{\circ}{S}_{m,k,s}\in \mathcal G | w_c(\accentset{\circ}{S}_{m,k,s})=w_i\}$.
\end{lemma}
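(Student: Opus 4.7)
The plan is to mirror the proof of Lemma~\ref{lemma:one}, the only substantive new ingredient being the treatment of the nonzero diagonal entries of $A$ at vertices of $\mathcal V_1$ coming from the loops. As in the loopless case, I would first reduce to the connected case, since the spectrum of $A$ on a disconnected graph is the multiset union of the spectra of its components and $(m,k,s)$-star degrees add across components. Then I would fix a single $\accentset{\circ}{S}_{m,k,s}$ and permute rows and columns so that $\mathcal V_1$ occupies indices $1,\ldots,m$ and $\mathcal V_2$ occupies indices $m+1,\ldots,m+k$. The resulting block form is identical to the one displayed in Lemma~\ref{lemma:one}, except that the diagonal at positions $1,\ldots,m$ now carries the common loop weight $w(i,i)$ rather than zero. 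By the uniformity conditions in the definition of a $(m,k,s)$-star with loops, for any $i,j\in\mathcal V_1$ rows $i$ and $j$ of $A$ agree on every column outside the $\mathcal V_1$ block; inside the $\mathcal V_1$ block, row $i$ carries $w(i,i)$ at position $i$, the common weight $w(i,\tilde i)$ (or zero if $s=0$) at all other $\mathcal V_1$-positions, and analogously for row $j$.

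The central step is then to choose a single scalar shift $c$ so that $\hat A := A + c I_n$ has its first $m$ rows pairwise identical. Concretely, $c$ is determined by the requirement that the shifted diagonal entry $w(i,i)+c$ equal the common off-diagonal value between vertices of $\mathcal V_1$; by the definition of $w_c(\accentset{\circ}{S}_{m,k,s})$ in the looped setting, this shift is precisely the one that turns $-c$ into $-w_i$. Once the first $m$ rows of $\hat A$ coincide, the $m-1$ difference vectors $e_{i_1}-e_{i_t}$, $t=2,\ldots,m$, lie in the left kernel of $\hat A$, and, by symmetry of $\hat A$, also in its right kernel. Writing $0=\det(\hat A-\mu I_n)=\det(A-(-c+\mu)I_n)$ with $\mu=0$ identifies $-c=-w_i$ as an eigenvalue of $A$ of multiplicity at least $m-1=\deg(\accentset{\circ}{S}_{m,k,s})$, exactly as in the loopless case.

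The final step is to aggregate over all $(m,k,s)$-stars with loops sharing central weight $w_i$. Since the sets $\mathcal V_1$ associated to distinct stars of $\mathcal G$ are disjoint, the difference vectors constructed from different stars in $\mathcal S_{w_i}$ are supported on disjoint index sets and hence remain linearly independent when pooled, which yields multiplicity at least $\deg(\mathcal S_{w_i}) = \sum_{\accentset{\circ}{S}_{m_j,k_j,s_j}\in\mathcal S_{w_i}}\deg(\accentset{\circ}{S}_{m_j,k_j,s_j})$ for the eigenvalue $-w_i$ of $A$. I expect the main obstacle to be bookkeeping rather than conceptual: one must verify that the uniform-shift construction is consistent across stars in the same weight class (in particular, that all their loop weights and all their internal $\mathcal V_1$-weights combine to the same $w_i$), and that the disjointness of the $\mathcal V_1$-supports is preserved after the block permutation used to bring each star into standard form simultaneously.
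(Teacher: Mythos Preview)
Your proposal is correct and follows essentially the same route as the paper, which does not write out a separate proof for Lemma~\ref{lemma:two} but treats it as the evident adaptation of the argument for Lemma~\ref{lemma:one}: shift $A$ by a scalar multiple of the identity so that the $\mathcal V_1$-rows coincide, read off $m-1$ independent kernel vectors supported on $\mathcal V_1$, and then aggregate over all stars with the same central weight using disjointness of their $\mathcal V_1$-supports. The one place you leave implicit is the verification that the required shift $c$ really matches $w_c(\accentset{\circ}{S}_{m,k,s})$ as defined in Definition~\ref{defn:cent} (with the loop term now nonzero); the paper is equally silent on this point, so your outline is on par with the paper's own level of detail.
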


For graphs with loops we can't apply the results on spectra of Laplacian matrices, but we can prove a result for the transition matrix analogous to that for simple graphs.

\begin{corollary}
If
\begin{itemize}
\item $\accentset{\circ}{S}_{m,k,s}$, as $m$ and $k$ vary in $\mathbb N$  and $m+k\leq n,$ is each $(m,k,s)-star$ with loops of $\mathcal G$;
\item $r$ is the number of $\accentset{\circ}{S}_{m,k,s}$ with different weights, $w_1,...,w_r$,\\
\end{itemize}
then for any $i\in\{1,...,r\},$
$$\exists \lambda\in{\sigma(T)} \mbox{ such that } \lambda=-w_i \mbox{ and } m_{T}(\lambda)\geq \sum_{i=1}^r deg(\mathcal S_{w_i})$$
where $\mathcal S_{w_i}:=\{\accentset{\circ}{S}_{m,k,s}\in \mathcal G | \displaystyle\frac{w_c(\accentset{\circ}{S}_{m,k,s})}{w(\accentset{\circ}{S}_{m,k,s})}=w_i\}$.
\end{corollary}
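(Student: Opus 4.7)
The plan is to reduce the claim to the symmetric setting via the similarity between $T$ and $\tilde A := D^{-1/2} A D^{-1/2}$ recorded in \ref{tildeAL1}--\ref{tildeAL3}: since $T$ is conjugate to the symmetric matrix $\tilde A$ through $D^{1/2}$, its spectrum (with algebraic multiplicity) coincides with that of $\tilde A$, and every eigenvector $v$ of $\tilde A$ at eigenvalue $\lambda$ produces the right eigenvector $D^{-1/2} v$ of $T$ at the same $\lambda$. It is therefore enough to exhibit $\deg(\mathcal S_{w_i})$ linearly independent eigenvectors of $\tilde A$ at eigenvalue $-w_i$.

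To do this I would adapt the row-identification argument of Lemma \ref{lemma:two}. Fix a single $(m,k,s)$-star with loops $\accentset{\circ}{S}_{m,k,s}$ and put $W := w(\accentset{\circ}{S}_{m,k,s})$. By the very definition of the star, every $i_p \in \mathcal V_1$ has strength exactly $W$, so $D$ is constant equal to $W$ on $\mathcal V_1$. Combined with the uniformity of the $A$-rows indexed by $\mathcal V_1$ (same loop weight on the diagonal, same intra-$\mathcal V_1$ weight off the diagonal, and, for each fixed $j \in \mathcal V \setminus \mathcal V_1$, the same weight $w(i_p,j)$), this shows that the entries $\tilde A_{i_p, i_q}$ and $\tilde A_{i_p, j}$ (for $j \notin \mathcal V_1$) are all independent of which representative $i_p$ of $\mathcal V_1$ is chosen.

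I would then consider the shifted matrix $\hat A := \tilde A + (w_c(\accentset{\circ}{S}_{m,k,s})/W)\, I_n$. The role of the scalar shift is to absorb the loop-versus-intra-edge imbalance on $\mathcal V_1$, making the rows of $\hat A$ indexed by $\mathcal V_1$ pairwise identical. Since $\hat A$ is symmetric, identical rows supply $m-1$ linearly independent kernel vectors of the form $e_{i_1} - e_{i_p}$ ($p = 2, \ldots, m$), and these are eigenvectors of $\tilde A$ with eigenvalue $-w_c/W = -w_i$. Repeating this construction for every star in $\mathcal S_{w_i}$ produces $\deg(\mathcal S_{w_i})$ such vectors; because the $\mathcal V_1$ sets of distinct stars are pairwise disjoint, the resulting eigenvectors have pairwise disjoint supports and are automatically linearly independent. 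Transporting them through the similarity with $T$ then gives the desired bound on the algebraic multiplicity of $-w_i$ in $\sigma(T)$.

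The delicate step I expect to be the main obstacle is the row identification inside the $\mathcal V_1$-block: in contrast with the loopless case of Lemma \ref{lemma:two}, the diagonal entry $\tilde A_{i_p, i_p}$ is now nonzero and equals $\ell/W$, with $\ell$ the common loop weight, so the shift $w_c/W$ must combine with $\ell/W$ to recover the common off-diagonal value $\gamma/W$ appearing elsewhere in the $\mathcal V_1$-block (with $\gamma$ the intra-$\mathcal V_1$ edge weight, taken to be $0$ when $s=0$). Checking that the central weight $w_c(\accentset{\circ}{S}_{m,k,s})$ as used in this section is indeed the quantity that makes this cancellation work is the only non-mechanical input; once it is verified, the rest of the argument proceeds exactly as in Lemma \ref{lemma:two}.
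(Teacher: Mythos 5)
Your overall route is exactly the one the paper intends (the corollary is stated without proof, but the surrounding text makes clear it is meant to follow from Lemma \ref{lemma:two} via the similarity $T\sim\tilde A=D^{-1/2}AD^{-1/2}$ of \ref{tildeAL1}--\ref{tildeAL3}): you correctly observe that all vertices of $\mathcal V_1$ have the common strength $W=w(\accentset{\circ}{S}_{m,k,s})$, so the normalization acts uniformly on the $\mathcal V_1$-block, and you correctly reduce everything to producing $\deg(\mathcal S_{w_i})$ difference vectors $e_{i_1}-e_{i_p}$ with pairwise disjoint supports. However, the one step you explicitly defer --- checking that the central weight of Definition \ref{defn:cent} is the quantity that makes the row identification work --- is precisely where the argument breaks, and it cannot be waved through. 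Computing directly, with $\ell:=w(i,i)$ the common loop weight and $\gamma:=w(i,\tilde i)$ the common intra-$\mathcal V_1$ weight, one finds
\[
\tilde A\,(e_{i_1}-e_{i_p})=\frac{\ell-\gamma}{W}\,(e_{i_1}-e_{i_p}),
\]
because the only surviving entries of the image are at positions $i_1$ and $i_p$, where the diagonal entry $\ell/W$ meets the off-diagonal entry $\gamma/W$. Equivalently, the shift $c$ that makes the $\mathcal V_1$-rows of $\tilde A+cI$ identical must satisfy $\ell+cW=\gamma$, i.e.\ $c=(\gamma-\ell)/W$. So the eigenvalue you actually produce is $-(\gamma-\ell)/W$, whereas Definition \ref{defn:cent} sets $w_c=\gamma+\ell$, so the claimed eigenvalue is $-(\gamma+\ell)/W$. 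These agree only when $\ell=0$, i.e.\ when there are no loops at all.

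Concretely: the cancellation you hoped for requires $w_c=\gamma-\ell$ (loop weight subtracted), not $w_c=\gamma+\ell$ (loop weight added) as in Definition \ref{defn:cent}. This discrepancy already afflicts Lemma \ref{lemma:two}, which the paper also does not prove, so it is most likely a sign slip in the paper's definition of central weight for the looped setting rather than a flaw in your strategy; with $w_c$ read as $w(i,\tilde i)-w(i,i)$ every other step of your argument (uniform strength on $\mathcal V_1$, disjoint supports across distinct stars, transport of eigenvectors through $D^{\pm 1/2}$) goes through verbatim. But as written your proof is incomplete exactly at the step you flagged, and an honest completion of that step shows the statement, with the paper's stated definition of $w_c$, does not hold for $\ell\neq 0$. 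You should either carry out the computation and record the corrected value of the eigenvalue, or explicitly restrict to the reinterpreted central weight.
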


\subsection{Correspondence between (m,k,s)-stars with loops and without loops}
In this section we define a correspondence between $(m,k,s)$-stars with loops and the $(m,k,s)$-stars without loops which preserves the eigenvalue spectrum of adjacency and transition matrices. 
In particular, in a graph, each vertex with a loop is equivalent to an $(1,k,-)$-star with loop and we will provide a procedure to replace the looped vertex with an $(2,k,s)$-star without loops 
which has the same spectra (see Fig.(\ref{looptono})).
\\

\begin{figure}[!!h]\label{looptono}
\begin{subfigure}{}
\includegraphics[width=4cm]{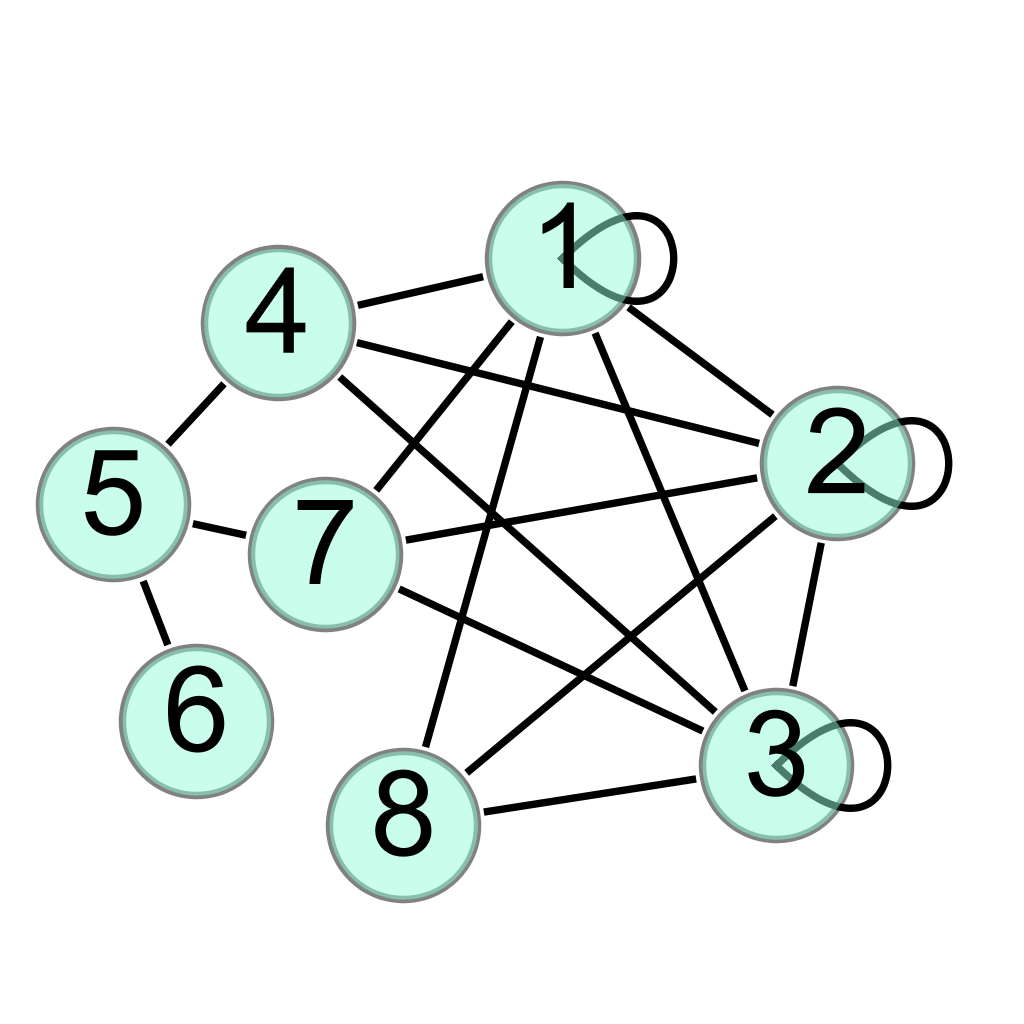}\end{subfigure}
\begin{subfigure}{}
\includegraphics[width=3.5cm]{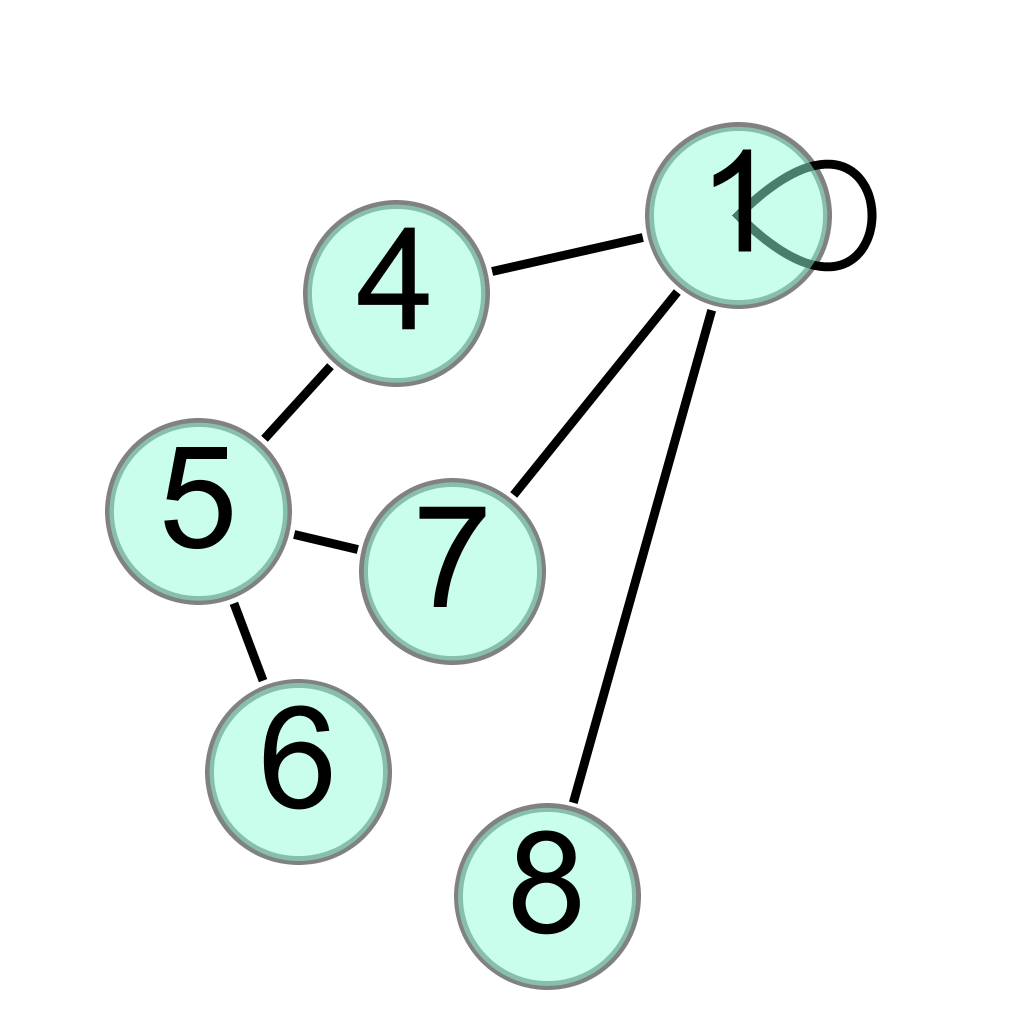}\end{subfigure}
\vspace{5mm}\begin{subfigure}{}
\includegraphics[width=3.5cm]{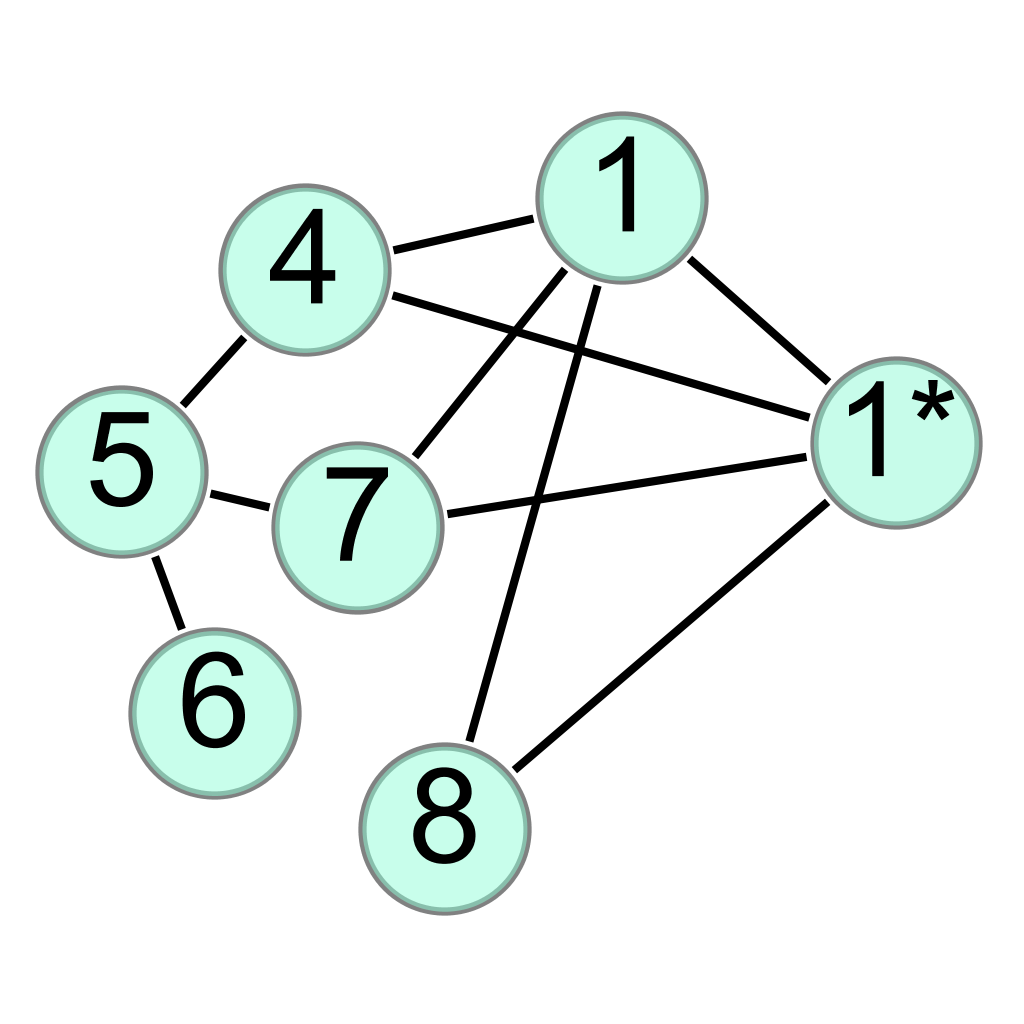}\end{subfigure}\vspace{-5mm}
\caption{Left: a $\mathring{S}_{3,3,1}$ in a graph; center: its $\mathring{S}^{\{-2\}}_{3,3,1}$ that is a $\mathring{S}_{1,3,-}$ in a graph; right: its $\mathring{S}^{\{+1\}}_{1,3,-}$ that is a ${S}_{2,3,1}$ in a graph.}
\end{figure}

The following definitions are useful:
\begin{definition}[$(m,k,s)$-star $q$-reduced: $S^{\{-q\}}_{m,k,s}$]
A $q$-reduced $(m,k,s)$-star is a $(m,k,s)$-star (with or without loops) of vertex sets $\{\mathcal V_1,\mathcal V_2\}$, such that the cardinality of $\mathcal V_1$ is decreased to $m-q$, with $m>q$.\\
Hence the order and degree of the $S^{\{-q\}}_{m,k,s}$ are $m+k-q$ and $m-q-1$ respectively.\\
Furthermore, let $w$ be the weights between vertexes in the original $(m,k,s)$-star and $\tilde i, \tilde j$ any vertex in $\mathcal V_1$, $\tilde i\neq \tilde j$, of the $(m,k,s)$-star, we define the weights of vertexes in $S^{\{-q\}}_{m,k,s}$ as 
\begin{equation}\label{eq:weightloop1}
w^{\{-q\}}(i,j) = \begin{cases}\frac{q}{m-q}w(\tilde i,\tilde j)+w(\tilde i,\tilde i),& \mbox{if } i,j\in\mathcal V_1,\  i=j\\
\frac{m}{m-q}w(\tilde i,j), & \mbox{if } i\in\mathcal V_1, j\in (\mathcal V_1\cup\mathcal V_2)\setminus\{i\} \\
 w(i,j), & \mbox{if } i,j \in\mathcal V_2 \\
0 & otherwise \end{cases},
\end{equation}
\end{definition}

\begin{definition}[$(1,k,-)$-star $q$-enlarged: $S^{\{+q\}}_{1,k,-}$]
A $q$-enlarged $(1,k,-)$-star is a $(1,k,-)$-star (with or without loops) of vertex sets $\{\mathcal V_1,\mathcal V_2\}$, such that the cardinality of $\mathcal V_1$ is increased to $q+1$ and the loop is removed.\\
Hence the order and degree of the $S^{\{+q\}}_{1,k,-}$ are $1+k+q$ and $q$ respectively.\\
Furthermore, let $w$ be the weights between vertexes in the original $(1,k,-)$-star and $\tilde i$ the vertex in $\mathcal V_1$ of the $(1,k,-)$-star, 
we define the weights of vertexes in $S^{\{+q\}}_{1,k,-}$ as
\begin{equation}\label{eq:weightloop2}
w^{\{+q\}}(i,j) = \begin{cases}\frac{1}{q}w(\tilde i,\tilde i), & \mbox{if } i\in\mathcal V_1, j\in \mathcal V_1\setminus\{i\} \\
\frac{1}{1+q}w(\tilde i,j), & \mbox{if } i\in\mathcal V_1, j\in \mathcal V_2 \\
 w(i,j) & \mbox{if } i,j \in\mathcal V_2 \\
0 & \mbox{if } i=j \in\mathcal V_1 \end{cases},
\end{equation}
\end{definition}

Finally we introduce the concept of the $q$-enlarged graph associate to a graph:
\begin{definition}[$q$-enlarged graph: $\mathcal G^{\{+q\}}$]
A $q$-enlarged graph $\mathcal G^{\{+q\}}$ is obtained from a graph $\mathcal G$ with some $(1,k,-)$-stars adding $q$ the vertexes in the set $\mathcal V_1$
of $\mathcal G$, removing the loops and defining the weights as in \eqref{eq:weightloop2}.
\end{definition}

% Similarly one can define the $q$-reduced graph. 
Using the previous definitions it is possible to associate to any graph containing $(m,k,s)-star$ with loops (or more simply whose vertexes has a loop),
an enlarged graph without loops  by means of an intermediate reduced graph  as illustrated in Fig.(\ref{looptono}). The following theorem holds

\begin{main}[Loop removal theorem - adjacency matrix]\label{th:loop_adj}
Let
\begin{itemize}
\item $\mathcal G$ be a graph, of n vertexes, with a $\accentset{\circ}{S}_{m,k,s}$,
\item $\mathcal H:=\mathcal G^{\{-(m-1)\}}$ be the $(m-1)$-reduced graph with a $\accentset{\circ}{S}_{m,k,s}^{\{-(m-1)\}}$ instead of $\accentset{\circ}{S}_{m,k,s}$,
\item $\mathcal I:=\mathcal H^{\{+q\}}$ be the $q$-enlarged graph with a ${S}_{1,k,-}^{\{+q\}}$ instead of $\accentset{\circ}{S}_{1,k,-}$,
\item $A$ be the adjacency matrix of $\mathcal G$,
\item $A^{\{-(m-1)\}}$ be the adjacency matrix of $\mathcal H$, defined as in \eqref{eq:weightloop1}
\item $A^{\{+q\}}$ be the adjacency matrix of $\mathcal I$, defined as in \eqref{eq:weightloop2}
\end{itemize}
then
\begin{enumerate}
\item $\sigma(A^{\{-(m-1)\}})\subset\sigma(A),$\\
\item $\sigma(A^{\{-(m-1)\}})\subset\sigma(A^{\{+q\}}),$\\
%where $w_c$ is the central weight defined in \ref{defn:cent};
\item There exists a matrix $ H\in\mathbb R^{n\times (n-(m-1))}$ such that $A^{\{-(m-1)\}}= H^TA H$ and $ H^T H=I$. 
Therefore, if $v$ is an eigenvector of $A^{\{-(m-1)\}}$ for an eigenvalue $\mu$, then $ Hv$ is an eigenvector of $A$ for the same eigenvalue $\mu$.
\item There exists a matrix $K\in\mathbb R^{n-(m-1)\times (n-(m-1)+q)}$ such that $A^{\{-(m-1)\}}=K^TA^{\{+q\}}K$ and $K^TK=I$. Therefore, if $v$ is an eigenvector of $A^{\{-(m-1)\}}$ for an
eigenvalue $\mu$, then $Kv$ is an eigenvector of $A^{\{+q\}}$ for the same eigenvalue $\mu$.
\end{enumerate}
\end{main}

{ Before proving Theorem \ref{th:loop_adj}, we recall the well known result for eigenvalues of symmetric matrices, \cite{Hwang2004}.

\begin{lemma}[Interlacing theorem]
Let $A\in Sym_{n_A}(\mathbb R)$ with eigenvalues $\mu_1(A)\geq...\geq \mu_{n_A}(A).$ For $n_B<n_A$, let $K\in\mathbb R^{n_A,n_B}$ be a
matrix with orthonormal columns, $K^TK=I$, and consider the  $B=K^TAK$ matrix, with eigenvalues $\mu_1(B)\geq...\geq \mu_{n_B}(B).$
If
\begin{itemize}
\item the eigenvalues of $B$ interlace those of $A$, that is,
$$\mu_i(A)\geq\mu_i(B)\geq\mu_{n_A-n_B+i}(A), \quad i=1,...,n_B,$$
\item if the interlacing is tight, that is, for some $0\leq k\leq n_B,$
$$\mu_i(A)=\mu_i(B), \ i=1,...,k\ \mbox{ and } \ \mu_i(B)=\mu_{n_A-n_B+i}(A), \ i=k+1,...,n_B$$
then $KB=AK.$
\end{itemize}
\end{lemma}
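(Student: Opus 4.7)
The plan is to derive both statements from the Courant--Fischer min--max characterization of eigenvalues of symmetric matrices, using the fact that $K$ is a linear isometry $\mathbb{R}^{n_B}\hookrightarrow\mathbb{R}^{n_A}$ with image $\mathrm{Range}(K)$.

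First I would establish the interlacing inequalities. The hypothesis $K^TK=I$ yields $\|Kx\|=\|x\|$ and $(Kx)^TA(Kx)=x^TK^TAKx=x^TBx$ for every $x\in\mathbb{R}^{n_B}$, so the Rayleigh quotient of $A$ restricted to $\mathrm{Range}(K)$ matches that of $B$ on $\mathbb{R}^{n_B}$ under the correspondence $x\leftrightarrow Kx$, and the map $S\mapsto K(S)$ bijects $i$-dimensional subspaces of $\mathbb{R}^{n_B}$ with $i$-dimensional subspaces of $\mathrm{Range}(K)$. Courant--Fischer then gives
\[
\mu_i(B)=\max_{\substack{\dim T=i\\ T\subset\mathrm{Range}(K)}}\min_{0\neq y\in T}\frac{y^TAy}{y^Ty}\;\leq\;\max_{\dim T=i,\,T\subset\mathbb{R}^{n_A}}\min_{0\neq y\in T}\frac{y^TAy}{y^Ty}=\mu_i(A),
\]
because dropping the side constraint $T\subset\mathrm{Range}(K)$ can only enlarge an outer maximum. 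The dual ``min over codimension-$(i-1)$ subspaces of max'' formulation, run symmetrically, gives $\mu_i(B)\geq\mu_{n_A-n_B+i}(A)$, completing the interlacing.

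For the tight-interlacing implication I would take an orthonormal eigenbasis $u_1,\ldots,u_{n_B}$ of $B$ with $Bu_i=\mu_i(B)u_i$ and set $v_i:=Ku_i$. The identity $K^TAK=B$ gives $K^T(Av_i-\mu_i(B)v_i)=Bu_i-\mu_i(B)u_i=0$, so each residual $r_i:=Av_i-\mu_i(B)v_i$ lies in $\mathrm{Range}(K)^\perp$; the target conclusion $AK=KB$ is then equivalent to $r_i=0$ for all $i$. I would complete $\{v_i\}_{i=1}^{n_B}$ to an orthonormal basis of $\mathbb{R}^{n_A}$ by a basis $\{w_j\}$ of $\mathrm{Range}(K)^\perp$ and represent $A$ in this basis as the block matrix
\[
M=\begin{pmatrix}\mathrm{diag}(\mu_1(B),\ldots,\mu_{n_B}(B)) & C^T\\ C & D\end{pmatrix},
\]
whose entry $C_{ji}=w_j^TAv_i$ is precisely the $w_j$-component of $r_i$. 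The task thus reduces to showing that tightness forces the relevant rows of $C$ to vanish.

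For an index $i\leq k$ with $\mu_i(A)=\mu_i(B)$, the span $\mathcal{W}_i:=\mathrm{span}(v_1,\ldots,v_i)$ has dimension $i$ and every $y\in\mathcal{W}_i$ satisfies $y^TAy\geq\mu_i(B)\|y\|^2=\mu_i(A)\|y\|^2$, so $\mathcal{W}_i$ attains the maximum in the Courant--Fischer formula for $\mu_i(A)$. A standard rigidity argument for extremising subspaces of symmetric operators then forces $\mathcal{W}_i$ to be $A$-invariant, so the first $i$ rows of $C$ are zero. A symmetric argument at the bottom of the spectrum, using the tail equalities $\mu_i(B)=\mu_{n_A-n_B+i}(A)$ for $i>k$, disposes of the remaining rows, giving $C=0$ and hence $AK=KB$. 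The main obstacle I expect is precisely the rigidity step: passing from the numerical equality $\mu_i(A)=\mu_i(B)$ to the $A$-invariance of $\mathcal{W}_i$. I would handle it by expanding each $v_j$ in an eigenbasis of $A$ and arguing that any nonzero coefficient against an eigenvector of $A$ with eigenvalue strictly below $\mu_i(A)$ could be exploited by a Rayleigh-quotient perturbation inside $\mathcal{W}_i$ to push the inner minimum strictly below $\mu_i(A)$, contradicting extremality; this confines $\mathcal{W}_i$ to the span of the top eigenvectors of $A$ and yields the required invariance.
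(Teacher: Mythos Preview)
The paper does not prove this lemma at all: it is quoted as a well-known result with a reference to \cite{Hwang2004}, and then used as a tool in the proof of the Main Theorem. So there is no ``paper's own proof'' to compare against; I can only comment on the soundness of your argument.

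Your derivation of the interlacing inequalities via Courant--Fischer is correct and is the standard route.

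The argument for the tight-interlacing consequence, however, has a genuine gap at precisely the point you flag as the main obstacle. The rigidity claim you rely on---that an $i$-dimensional subspace $\mathcal W_i$ realising the outer maximum in the Courant--Fischer formula for $\mu_i(A)$ must lie in the span of the top eigenvectors of $A$ (and hence be $A$-invariant)---is false in general. Take $A=\mathrm{diag}(3,2,2,1)$ and $\mathcal W_2=\mathrm{span}\bigl((e_1+e_4)/\sqrt2,\,e_2\bigr)$. A direct computation shows that every nonzero $y\in\mathcal W_2$ has Rayleigh quotient exactly $2=\mu_2(A)$, so $\mathcal W_2$ attains the Courant--Fischer maximum; yet $(e_1+e_4)/\sqrt2$ has a nonzero component along $e_4$, whose eigenvalue $1$ is strictly below $\mu_2(A)$, and $A(e_1+e_4)=3e_1+e_4\notin\mathcal W_2$. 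Your proposed perturbation argument cannot produce a vector in $\mathcal W_2$ with Rayleigh quotient below $2$, because the ``bad'' $e_4$-component of $v_1$ is inseparably coupled to the ``good'' $e_1$-component within $\mathcal W_2$.

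What rescues the situation under the tightness hypothesis is information your argument does not use: you actually have $v_j^TAv_j=\mu_j(B)=\mu_j(A)$ for \emph{every} $j\le k$, not merely $\min_{\mathcal W_i}=\mu_i(A)$. In particular $v_1^TAv_1=\mu_1(A)$, so $v_1$ attains the global maximum of the Rayleigh quotient and is therefore an eigenvector of $A$; this gives $r_1=0$. One then restricts $A$ to $v_1^\perp$ (which is $A$-invariant), restricts $K$ accordingly, and repeats: $v_2$ now attains the top Rayleigh quotient of the restricted operator, hence is an eigenvector, and so on through $j=k$. The symmetric argument from the bottom handles $j>k$. Replacing your one-shot rigidity step with this induction closes the gap.
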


\begin{proof}
We will explicitly prove only the items 2. and 4., because using the same arguments the statements 1. and 3. follow and the matrix $ H$ exists.\\
First we prove the existence of the $K$ matrix:\\
let $\tilde n:=n-(m-1)$ and $\mathcal P=\{P_1,...,P_{\tilde n}\}$ be a partition of the vertex set $\{1,...,\tilde n+q\}$.
The \textit{ characteristic matrix $\tilde K$ } is defined as the matrix where the $j$-th column is the characteristic vector of $P_j$ ($j=1,...,\tilde n$).\\
Let $A^{\{+q\}}$ be partitioned according to $\mathcal P$
\[
A^{\{+q\}}=\left(
\begin{array}{ccc}
A^{\{+q\}}_{1,1} & \dots & A^{\{+q\}}_{1,\tilde n} \\
\vdots &  & \vdots \\
A^{\{+q\}}_{\tilde n,1} & \dots & A^{\{+q\}}_{\tilde n,\tilde n}
\end{array}
\right),\]

where $A^{\{+q\}}_{i,j}$ denotes the block with rows in $P_i$ and columns in $P_j$.
The matrix $A^{\{-(m-1)\}}=(a^{\{-(m-1)\}}_{ij})$ whose entries $a^{\{-(m-1)\}}_{ij}$ are the averages of the $A^{\{+q\}}_{i,j}$ rows, is called the \textit{quotient matrix} of $A^{\{+q\}}$ with respect to $\mathcal P$,
i.e. $a^{\{-(m-1)\}}_{ij}$ denotes the average number of neighbors in $P_j$ of the vertices in $P_i$.\\
The partition is equitable if for each $i,j$, any vertex in $P_i$ has exactly $a^{\{-(m-1)\}}_{ij}$ neighbors in $P_j$.
In such a case, the eigenvalues of the quotient matrix $A^{\{-(m-1)\}}$ belong to the spectrum of $A^{\{+q\}}$ ($\sigma(A^{\{-(m-1)\}})\subset\sigma(A^{\{+q\}})$) and the spectral radius
of $A^{\{-(m-1)\}}$ equals the spectral radius of $A^{\{+q\}}$: for more details cfr. \cite{brouwer12}, chapter 2.\\
Then we have the relations
$$
M^{1/2}A^{\{-(m-1)\}}M^{1/2}=\tilde K^TA^{\{+q\}}\tilde K, \quad \tilde K^T\tilde K=M.
$$
Considering an $(1,k,-)-$star (with loop) in a graph with adjacency matrix $A^{\{-(m-1)\}}$, we weight it by a diagonal mass matrix $M$ of order $\tilde n$ whose diagonal entries
are one except for the entry of the vertex in $\mathcal V_1$,
\begin{equation}\label{eq:diagM}
M_{ii} = \begin{cases} \frac{1}{1+q}, & \mbox{if } i\in\mathcal V_1 \\ 1 & \mbox{otherwise } \end{cases},
\end{equation}
and we get
$$ 
A^{\{-(m-1)\}}=K^TA^{\{+q\}}K, \quad K^TK=I,
$$
where $K:=\tilde KM^{-1/2}.$
In addition to the Theorem (\ref{th:one}), the eigenvalues of the matrix $A^{\{-(m-1)\}}$ belong also to the spectrum of the matrix $A^{\{+q\}}$,

$$
\sigma(A^{\{-(m-1)\}})\subset\sigma(A^{\{+q\}}).
$$

Finally, if $v$ is an eigenvector of $A^{\{-(m-1)\}}$ with eigenvalue $\mu$, then $Kv$ is an eigenvector of $A^{\{+q\}}$ with the same eigenvalue $\mu$.\\
Indeed, from the equation
$A^{\{-(m-1)\}}v=\mu v$
and taking into account that the partition is equitable, we have $KA^{\{-(m-1)\}}=A^{\{+q\}}K,$ and
$$A^{\{+q\}}(Kv)=(A^{\{+q\}}K)v=(KA^{\{-(m-1)\}})v=\mu (Kv).$$

\end{proof}}

A similar result holds for the transition matrix $T$, and more in general for each $D^{-1}A$ where $A$ is the adjacency matrix of the graph $\mathcal G$ with a $\accentset{\circ}{S}_{m,k,s}$ 
and $D$ any real diagonal matrix such that $d_{ii}=d_{jj}$ for any $i,j\in \mathcal V_1$. This is states by the following Theorem:\\
%%%%%%%%%%%%%%%%%%%%%%%%%%%%%%%%%%%%%%%%%%%%%%%%%%

\begin{theorem}[Loop removal theorem - transition matrix]\label{th:loop_tran}
Let
\begin{itemize}
\item $\mathcal G$ be a graph, of n vertices, with a $\accentset{\circ}{S}_{m,k,s}$,
\item $\mathcal H:=\mathcal G^{\{-(m-1)\}}$ be the $(m-1)$-reduced graph with a $\accentset{\circ}{S}_{m,k,s}^{\{-(m-1)\}}$ instead of $\accentset{\circ}{S}_{m,k,s}$,
\item $\mathcal I:=\mathcal H^{\{+q\}}$ be the $q$-enlarged graph with a ${S}_{1,k,-}^{\{+q\}}$ instead of $\accentset{\circ}{S}_{1,k,-}$,
\item $A$ and $D$ be, respectively, the adjacency matrix and the strength diagonal matrix of $\mathcal G$,
\item $A^{\{-(m-1)\}}$ and $D^{\{-(m-1)\}}$ be, respectively, the adjacency matrix and the strength diagonal matrix of $\mathcal H$, defined as in \eqref{eq:weightloop1}
\item $A^{\{+q\}}$ and $D^{\{+q\}}$ be, respectively, the adjacency matrix and the strength diagonal matrix of $\mathcal I$, defined as in \eqref{eq:weightloop2}
\end{itemize}
then
\begin{enumerate}
\item $\sigma(T^{\{-(m-1)\}})\subset\sigma(T),$ where $T^{\{-(m-1)\}}:=(D^{\{-(m-1)\}})^{-1}A^{\{-(m-1)\}}$ and $T:=D^{-1}A$\\
\item $\sigma(T^{\{-(m-1)\}})\subset\sigma(T^{\{+q\}}),$ where $T^{\{+q\}}:=(D^{\{+q\}})^{-1}A^{\{+q\}}$\\
%where $w_c$ is the central weight defined in \ref{defn:cent};
\item There exists a matrix $H\in\mathbb R^{n\times (n-(m-1))}$ such that $T^{\{-(m-1)\}}=H^TTH$ and $H^TH=I$. Therefore, if $v$ is an eigenvector of $T^{\{-(m-1)\}}$ for an
eigenvalue $\mu$, then $Hv$ is an eigenvector of $T$ for the same eigenvalue $\mu$.
\item There exists a matrix $K\in\mathbb R^{n-(m-1)\times (n-(m-1)+q)}$ such that $T^{\{-(m-1)\}}=K^TT^{\{+q\}}K$ and $K^TK=I$. Therefore, if $v$ is an eigenvector of $T^{\{-(m-1)\}}$ for an
eigenvalue $\mu$, then $Kv$ is an eigenvector of $T^{\{+q\}}$ for the same eigenvalue $\mu$.
\end{enumerate}
\end{theorem}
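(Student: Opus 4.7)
The plan is to mirror the proof of the adjacency Theorem \ref{th:loop_adj}, using the very same equitable partition $\mathcal P$ (in which the $\mathcal V_1$ of the star is the only non-singleton block) and the very same matrices $\tilde K$, $M$, $K=\tilde K M^{-1/2}$ and $H$ constructed there. What is new is only to show that inserting the factor $D^{-1}$ in front of $A$ survives the quotient operation, and this is exactly what the hypothesis $d_{ii}=d_{jj}$ for all $i,j\in\mathcal V_1$ buys.

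Concretely, I would first observe that this hypothesis forces $D^{\{+q\}}$ to be constant on every block of $\mathcal P$ (the non-trivial block is $\mathcal V_1$, the rest are singletons), hence $(D^{\{+q\}})^{-1}$ acts on each characteristic vector $\chi_{P_j}$ by a scalar $1/\hat d_j$. Equivalently, $(D^{\{+q\}})^{-1}\tilde K=\tilde K\hat D^{-1}$ with $\hat D$ the diagonal of block-constant values, and the corresponding constancy of $D^{\{-(m-1)\}}$ allows us to take $\hat D=D^{\{-(m-1)\}}$. Combined with the adjacency identity $\tilde K^T A^{\{+q\}}\tilde K=M^{1/2}A^{\{-(m-1)\}}M^{1/2}$ established in the proof of Theorem \ref{th:loop_adj}, and the commutativity of diagonal matrices, a direct computation gives
\begin{equation*}
K^T T^{\{+q\}} K = M^{-1/2}\hat D^{-1}\tilde K^T A^{\{+q\}}\tilde K M^{-1/2} = \hat D^{-1} A^{\{-(m-1)\}} = T^{\{-(m-1)\}},
\end{equation*}
and $K^TK=I$ is inherited from the adjacency construction. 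The analogous computation with the matrix $H$ produced in Theorem \ref{th:loop_adj} establishes $T^{\{-(m-1)\}}=H^T T H$, yielding items 3 and 4.

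Because $T$ is not symmetric, the interlacing lemma cannot be invoked directly for the inclusions of items 1 and 2. Instead the equitable partition framework provides, essentially for free, the intertwining relation $T^{\{+q\}} K=K T^{\{-(m-1)\}}$ (the same computation without the leading $K^T$), from which $\sigma(T^{\{-(m-1)\}})\subset\sigma(T^{\{+q\}})$ and the eigenvector lift $v\mapsto Kv$ are immediate; the analogous intertwining with $H$ gives items 1 and 3.

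The main obstacle is the bookkeeping of the three diagonal matrices: one must verify that when $D$, $D^{\{-(m-1)\}}$, $D^{\{+q\}}$ are taken to be the strength matrices of their respective graphs, the hypothesis $d_{ii}=d_{jj}$ on $\mathcal V_1$ is preserved under the reduction and enlargement defined by \eqref{eq:weightloop1}-\eqref{eq:weightloop2}, and that the per-block common values match across graphs so that the identification $\hat D=D^{\{-(m-1)\}}$ is legitimate. If the strength matrices do not match exactly, one uses the flexibility granted by the remark preceding the theorem and chooses the three $D$'s consistently by hand; alternatively, the whole statement can be transferred from the symmetric case by applying Theorem \ref{th:loop_adj} to $\tilde A=D^{-1/2}AD^{-1/2}$ and invoking the similarity $T\sim\tilde A$ recorded in statements \ref{tildeAL3}-\ref{tildeAL1}.
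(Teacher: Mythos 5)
Your proposal is correct and, if anything, more careful than the paper's own proof, which consists of a single sentence: repeat the adjacency-matrix argument, using the equivalences \ref{tildeAL1}--\ref{tildeAL3} to pass to the symmetric matrix $\tilde A=D^{-1/2}AD^{-1/2}$ (similar to $T$) and back. Your primary route differs in that you never symmetrize: you keep $T$ non-symmetric and extract the intertwining relation $T^{\{+q\}}K=KT^{\{-(m-1)\}}$ directly from the fact that the strength matrix is constant on the blocks of the equitable partition --- which is precisely what the hypothesis $d_{ii}=d_{jj}$ for $i,j\in\mathcal V_1$, stated in the remark before the theorem, guarantees. This is a genuinely better-adapted argument: as you note, the interlacing lemma that the paper's adjacency proof leans on is a statement about symmetric matrices and cannot be applied to $T$ as written, whereas the intertwining identity yields the spectral inclusions of items 1--2 and the eigenvector lifts $v\mapsto Hv$, $v\mapsto Kv$ of items 3--4 with no symmetry needed; the paper's symmetrization route buys verbatim reuse of the adjacency machinery at the cost of conjugating eigenvectors by $D^{\pm1/2}$ afterwards. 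You also put your finger on the one substantive point the paper passes over in silence: for the identification $\hat D=D^{\{-(m-1)\}}$ one must check that the common strength of the $\mathcal V_1$-vertices of $\mathcal I$ under \eqref{eq:weightloop2} agrees with the strength of the looped vertex of $\mathcal H$ under \eqref{eq:weightloop1}, and this does not hold automatically, since the loop weight and the rescaled edge weights towards $\mathcal V_2$ contribute with different scalings. Your two fallbacks --- fixing the three diagonal matrices consistently, as the stated generality ``any real diagonal $D$ with $d_{ii}=d_{jj}$ on $\mathcal V_1$'' permits, or transferring the whole statement from Theorem \ref{th:loop_adj} via the similarity $T\sim\tilde A$ --- are both legitimate ways to close this, and flagging the issue at all is more than the paper does.
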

{The proof for the transition matrix version of the Loop removal theorem \ref{th:loop_adj} is similar to that for the adjacency matrix. More explicitly, using the same arguments as in the proof of
\ref{th:loop_adj} and the equivalences \ref{tildeAL1}--\ref{tildeAL3} in order to work with symmetric matrices, the items 1. and 2. are true and the matrices $H$ and $K$ exist. 
%%%%%%%%%%%%%%%%%%%%%%%%%%%%%%%%%%%%%%%

\begin{corollary}\label{cor:normlap}
Let $\mathcal G$ be a graph, of $n$ vertexes, with a $\accentset{\circ}{S}_{m,k,s}$,
if $v$ is a right eigenvector of $T$ with eigenvalue $\lambda\in
\sigma(T)\setminus\{\displaystyle-\frac{w_c(\accentset{\circ}{S}_{m,k,s})}{w(\accentset{\circ}{S}_{m,k,s})}\}$
then $D^{1/2}v$ is an eigenvector of $\mathcal L$ with eigenvalue $(1-\lambda)$.

\end{corollary}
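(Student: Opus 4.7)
The plan is to apply the equivalences~\ref{tildeAL1}--\ref{tildeAL4} from Section~\ref{sec:2} directly, combined with the Loop removal theorem for the transition matrix, Theorem~\ref{th:loop_tran}. Starting from $Tv=\lambda v$, i.e.\ $D^{-1}Av=\lambda v$, I would rearrange to $Av=\lambda Dv$ and left-multiply by $D^{-1/2}$ to obtain
$$(D^{-1/2}AD^{-1/2})(D^{1/2}v)=\lambda(D^{1/2}v),$$
which is exactly statement~\ref{tildeAL1} with $w:=D^{1/2}v$. Interpreting $\mathcal L$ for the looped graph as $I-D^{-1/2}AD^{-1/2}$ (equivalently $D^{-1/2}(D-A)D^{-1/2}$), the equivalence~\ref{tildeAL1}$\Leftrightarrow$\ref{tildeAL4} then yields $\mathcal Lw=(1-\lambda)w$, with $w\neq 0$ since $D^{1/2}$ is invertible. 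This is the computational core.

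The excluded value $\lambda=-w_c(\mathring S_{m,k,s})/w(\mathring S_{m,k,s})$ is precisely the loop-induced multiple eigenvalue of $T$ identified by the transition-matrix corollary immediately preceding this subsection. Its role becomes transparent when one reads $\mathcal L$ through the loop removal construction instead: passing first from $T$ to $T^{\{-(m-1)\}}$ via the isometry $H$ and then to $T^{\{+q\}}$ via $K$, as provided by Theorem~\ref{th:loop_tran}, the inclusions $\sigma(T^{\{-(m-1)\}})\subset\sigma(T)\cap\sigma(T^{\{+q\}})$ guarantee that eigenvectors transfer only for $\lambda\in\sigma(T^{\{-(m-1)\}})$, that is for $\lambda\neq -w_c/w$. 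Inside the enlarged loopless graph $\mathcal I=\mathcal H^{\{+q\}}$ the equivalence~\ref{tildeAL1}$\Leftrightarrow$\ref{tildeAL4} applies in its standard form to the properly defined $\mathcal L^{\{+q\}}$, and the pull-back through $H$ and $K$ reproduces the statement on $\mathcal G$.

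The main obstacle I anticipate is reconciling the two readings of $\mathcal L$: the direct formal argument works for every $\lambda$, whereas the loop-removal interpretation naturally forces the exclusion $\lambda\neq -w_c/w$. A clean proof must commit to one interpretation and then verify compatibility with the object $D^{1/2}v$ appearing in the statement. In the loop-removal reading this means checking that $D^{1/2}v$ matches, after the rescaling prescribed by~\eqref{eq:weightloop1}--\eqref{eq:weightloop2}, the vector $(D^{\{+q\}})^{1/2}KH^{T}v$ produced by the isometries; the two matrices were constructed precisely so that this identification preserves the eigenvector structure. Once this bookkeeping is in place, the corollary follows immediately from Theorem~\ref{th:loop_tran} together with the equivalences~\ref{tildeAL1}--\ref{tildeAL4}, with no further calculation required.
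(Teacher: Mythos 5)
Your proposal is correct, and your first paragraph already constitutes a complete, elementary proof of the literal statement: from $Av=\lambda Dv$ one obtains $D^{-1/2}(D-A)D^{-1/2}(D^{1/2}v)=(1-\lambda)(D^{1/2}v)$, which is exactly the equivalence \ref{tildeAL1}$\Leftrightarrow$\ref{tildeAL4}, valid for every $\lambda$ and never using the absence of loops. The paper, by contrast, gives no computation at all --- its entire proof is the single sentence that the claim ``directly follows from Theorem \ref{th:loop_tran}'' --- so it commits to the second reading you describe, in which $\mathcal L$ is the normalized Laplacian associated to the loop-free graph produced by the loop-removal construction (the object the authors actually want, since they explicitly decline to define a Laplacian for looped graphs). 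You have correctly diagnosed the only genuine subtlety: under the direct reading the exclusion $\lambda\neq -w_c(\mathring{S}_{m,k,s})/w(\mathring{S}_{m,k,s})$ is vacuous, and it only acquires content under the loop-removal reading, where the excluded value is precisely the part of $\sigma(T)$ that does not survive the passage $T\mapsto T^{\{-(m-1)\}}$, so that eigenvectors transfer through $H$ and $K$ only away from it. Your route is the more self-contained of the two; what the paper's route buys is consistency with its policy of speaking of $\mathcal L$ only for simple graphs. If you adopt the paper's reading, the one step still owed is the bookkeeping you flag at the end --- the identification of $D^{1/2}v$ with the transported eigenvector $(D^{\{+q\}})^{1/2}KH^{T}v$ is asserted in your sketch but not verified, and it is the only substantive content the corollary adds beyond Theorem \ref{th:loop_tran}.
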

The proof directly follows from the Theorem \ref{th:loop_tran}.

\section{Conclusions}\label{sec:4}

The Laplacian matrix associated to undirected graphs provides powerful tools to study the geometrical and dynamical properties of the graph \cite{biggs:1993, Chung97}. 
In particular, its spectral properties allow to study random walk processes on graphs (e.g. the existence of bifurcation phenomena in the solutions) {and to characterize normal modes in a "springs and masses" interpretation of the graph}. 
The possibility of associating a Laplacian matrix to multigraphs can be a powerful tool in the application of graph theory to network theory in complex system physics, {for example in the case of generalized graph Laplacians, in which we can consider both a "kinetic" and a "potential" energy term}.
In a previous work \cite{Andreotti18} we have associated the presence of $(m,k)$-stars in a graph to eigenvalue multiplicity in the Laplacian matrix spectrum. 
In this work, we have extended the previous results for $(m,k)$-stars to  $(m,k,s)$-stars with loops. 
Our approach allows to introduce relations between the spectral properties of adjacency or Laplacian matrices associated to graphs containing $(m,k,s)$-stars with loops 
and the spectral properties of corresponding graphs containing only $(m,k,s)$-stars without loops.
This approach allows to extend  methods developed for simple graphs also to multigraphs, for example for graph bisection or clustering purposes. 
The results discussed in the paper allow, firstly, to reduce the size of a graph (with or without loops) preserving the spectral properties and then to describe a graph with loops as a simple graph, without discarding relevant information of the original graph. 
As a consequence, it is possible to associate to a graph with loops a Laplacian matrix of the reduced graph without loops.
Despite  the fact that graphs with loops appear in many natural contexts and that they can be obtained by several kinds of aggregation, scaling and blocking procedures, 
they have not been considered as extensively as simple graphs, {since their properties do not verify the conditions required for many theorems on simple graphs}. 
Possible applications of our results could be to organizational networks, where different kinds of ties may appear within the same branch creating loops \cite{baofu2008future}, in citation and co-authorship networks, in which self-citations are possible and the link weights between two authors in co-authorship networks can increase over time if they have further collaborations \cite{Bonzi1991, BARABASI2002590}, and also in opinion networks, where individuals are subject to vanity \cite{RePEc:jas:jasssj:2012-31-2, Quattrociocchi2014OpinionDO}. 
Finally, our results could be relevant in neural network models on undirected graphs, where loops tend to freeze the dynamics that makes the system converge toward fixed points \cite{GOLES2015156}, {and  to general models of anomalous (sub)diffusion on networks, in which loops represent "traps" that slow down the systems dynamics \cite{BouchaudGeorges90}}.

\section*{Acknowledgments}
E. A. thanks Domenico Felice (Max Planck Institute for Mathematics in the Sciences of Leipzig, Germany) for interesting discussions.
Part of this work was developed during E. A.'s stay at Max Planck Institute for Mathematics in the Sciences in Leipzig, the author thanks the institution for the very kind hospitality.
\bibliographystyle{alpha}
\bibliography{loop_20dic18}

\end{document}